\newcommand{\norm}[1]{\ensuremath{\left\| #1 \right\|}}
\newcommand{\braces}[1]{\ensuremath{\left\{ #1 \right\}}}
\newcommand{\parenth}[1]{\ensuremath{\left( #1 \right)}}
\newcommand{\refeqn}[1]{(\ref{eqn:#1})}
\newcommand{\SO}{\ensuremath{\mathsf{SO(3)}}}
\newcommand{\T}{\ensuremath{\mathsf{T}}}
\newcommand{\so}{\ensuremath{\mathfrak{so}(3)}}
\renewcommand{\Re}{\ensuremath{\mathbb{R}}}
\newcommand{\Sph}{\ensuremath{\mathsf{S}}}
\newcommand{\D}{\ensuremath{\mathbf{D}}}
\title{\LARGE \bf
Geometric Controls for a Tethered Quadrotor UAV}
\author{Taeyoung Lee%
\thanks{Taeyoung Lee, Mechanical and Aerospace Engineering, George Washington University, Washington DC 20052 {\tt tylee@gwu.edu}}
\thanks{\textsuperscript{\footnotesize\ensuremath{*}}This research has been supported in part by NSF under the grants CMMI-1243000 (transferred from 1029551), CMMI-1335008, and CNS-1337722.}
}
\newtheorem{prop}{Proposition}
\newtheorem{assump}{Assumption}
\begin{document}
\allowdisplaybreaks

\maketitle \thispagestyle{empty} \pagestyle{empty}

\begin{abstract}
This paper deals with the dynamics and controls of a quadrotor unmanned aerial vehicle that is connected to a fixed point on the ground via a tether. Tethered quadrotors have been envisaged for long-term aerial surveillance with high-speed communications. This paper presents an intrinsic form of the dynamic model of a tethered quadrotor including the coupling between deformations of the tether and the motion of the quadrotor, and it constructs geometric control systems to asymptotically stabilize the coupled dynamics of the quadrotor and the tether. The proposed global formulation of dynamics and control also avoids complexities and singularities associated with local coordinates. These are illustrated by numerical examples.
\end{abstract}

\section{Introduction}

A quadrotor unmanned aerial vehicle (UAV) consists of two pairs of counter-rotating rotors and propellers, located at the vertices of a square frame. It is capable of vertical take-off and landing (VTOL), but it does not require complex mechanical linkages, such as swash plates or teeter hinges, that commonly appear in typical helicopters. Due to its simple mechanical structure and higher thrust-to-weight ratio, it has been utilized for various applications such as mobile sensor network, and autonomous delivery systems. 

In particular, tethered quadrotors have been envisaged for aerial surveillance recently. This corresponds to a quadrotor UAV that is connected to a fixed ground point or a mobile vehicle via a tether, which can be used to for various purposes, such as supplying power to the quadrotor consistently, or high-bandwidth communication. This eliminates certain drawbacks of typical quadrotors that rely on onboard batteries and that use radio communication, namely limited flight endurance and communication bandwidth. Therefore, tethered quadrotors are particularly useful for several mission scenarios, such as prolonged aerial surveillance, communication relay, or border protection with secure, high-definition video feed~\cite{Pat15}. 

Stabilization and observer design for a tethered quadrotor have been studied in~\cite{LupDAnPIICIRS13,NicNalPIWC14,TogFraPIICRA15}. However, these results are commonly based on two simplifying assumptions that the quadrotor and the tether remain on a fixed two-dimensional plane, and the tether is always taut. As such, they may not be suitable for realistic scenarios where the quadrotor performs aggressive translational and rotational maneuvers, or the tether deforms due to the dynamic coupling with the quadrotor.

This paper is focused on eliminating such restrictive assumptions both in the dynamic model and control system design. First, we present a mathematical model of the tethered quadrotor in the three-dimensional space including deformation of the tether, which is considered as an arbitrary number of rigid links that are serially interconnected via ball joints. Next, a geometric tracking control system is designed such that the quadrotor can asymptotically follow a given desired trajectory in the three-dimensional space while controlling the tension along the tether, assuming that the tether is taut. Numerical simulations illustrate that the tension should be sufficiently large to prevent lateral vibrations of the tether, when applied to the flexible tether model. This motivates the development of another control system to stabilize both the quadrotor and the tether simultaneously, while incorporating the deformable dynamics of the flexible tether explicitly. 

Another distinct feature is that the equations of motion and the control systems are developed directly on the nonlinear configuration manifold in a coordinate-free fashion. This yields remarkably compact expressions for the dynamic model and controllers, compared with those based on local coordinates that often require symbolic computational tools due to complexity of multibody systems. Furthermore, singularities of local parameterization are completely avoided.

In short, the main contributions of this paper are summarized as (i) a global dynamic model for a tethered quadrotor with flexible tether, (ii) a geometric tracking control system for the quadrotor maneuvers in the three-dimensional space and the tension when the tether is taut, and (iii) a geometric control system for the quadrotor and the flexible tether. The second item can be considered as a new contribution independently, and the dynamics and control including flexibility of tether have been unprecedented.

\begin{figure}
\setlength{\unitlength}{0.09\columnwidth}
\centerline{\footnotesize\selectfont
\begin{picture}(5,5.5)(0,0)
\put(0,0){\includegraphics[width=0.45\columnwidth]{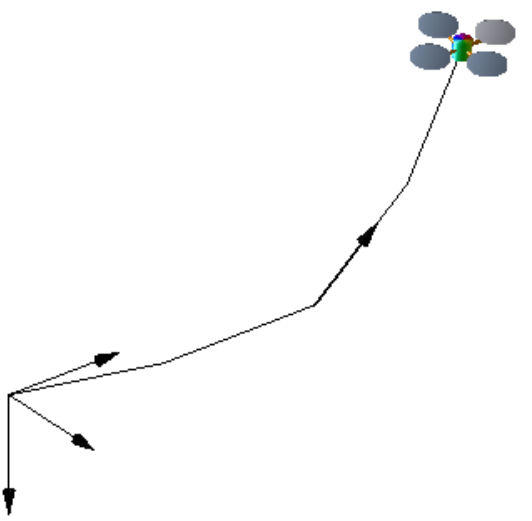}}
\put(-0.3,0){$\vec e_3$}
\put(0.8,1.7){$\vec e_1$}
\put(0.6,0.4){$\vec e_2$}
\put(3.6,2.4){$q_i\in\Sph^2$}
\put(3.9,3.1){$x_i$}
\put(2.95,1.8){$x_{i-1}$}
\put(2.6,2.8){$m_i,l_i$}
\put(3.1,4.5){$m,J$}
\put(5.0,4.2){$R\in\SO$}
\end{picture}
}
\caption{Tethered quadrotor UAV: flexible tether is modeled as a serial chain of $n$ links, and the configuration manifold is $\mathsf{Q}=(\Sph^2)^n\times \SO$.}
\end{figure}

\section{Dynamics of  a Tethered Quadrotor UAV}

Consider a quadrotor UAV that is connected to a fixed point on the ground via a tether. Define an inertial frame whose origin is located at the pivot point where tether is attached to the ground. The third axis of the inertial frame is pointing downward along the direction of gravity. Define a body-fixed frame whose origin is located at the mass center of the quadrotor (see Figure 1). 

We approximate the tether by an arbitrary number, namely $n$, of rigid links that are serially connected by ball joints. The links are indexed from the ground to the quadrotor in ascending order, i.e., the first link is attached to the ground, and the $n$-th link is attached to the quadrotor. Let the mass and the length of the $i$-th link be $m_i,l_i\in\Re$, respectively, where it is assumed that the mass is uniformly distributed along each link. Throughout this paper, the subscript $i$ is considered as an element of $\{1,\ldots,n\}$. 

The direction of the $i$-th link toward the quadrotor is denoted by the unit-vector $q_i\in\Sph^2=\{q\in\Re^3\,|\, \|q\|=1\}$. The location of the outward end of the $i$-th link, namely $x_i\in\Re^3$ is given by
\begin{align}
x_i =\sum_{j=1}^i l_j q_j,\label{eqn:xi}
\end{align}
and therefore, the $i$-th link connects $x_{i-1}$ with $x_{i}$ assuming $x_0=0_{3\times 1}$.
The $n$-th link is attached to the mass center of the quadrotor such that the location of the quadrotor corresponds to $x\triangleq x_n=\sum_{j=1}^n l_jq_j\in\Re^3$. Let $R\in\SO=\{R\in\Re^{3\times 3}\,|\, R^TR=I,\,\mathrm{det}[R]=1\}$ be the rotation matrix describing the attitude of the quadrotor, and it represents the linear transformation of the representation of a vector from the body-fixed frame to the inertial frame. Therefore, the configuration manifold of the presented tethered quadrotor is $\mathsf{Q}=(\Sph^2)^n\times\SO$.

The kinematics equations are given by
\begin{gather}
\dot q_i =\omega_i \times q_i,\label{eqn:dotqi}\\
\dot R = R\hat\Omega,\label{eqn:dotR}
\end{gather}
where $\omega_i\in\Re^3$ is the angular velocity of the $i$-th link represented with respect to the inertial frame. Without loss of generality, it is assumed that $q_i\cdot\omega_i=0$, i.e., $\omega_i$ is perpendicular to $q_i$. The standard dot product is denoted by $x\cdot y = x^T y$ for any $x,y\in\Re^n$ in this paper. The vector $\Omega\in\Re^3$ corresponds to the angular velocity of the quadrotor represented with respect to the body-fixed frame, and the \textit{hat map} $\hat\cdot : \Re^3\rightarrow \so=\{S\in\Re^{3\times 3}\,|\, S^T=-S\}$ is defined such that $\hat x y =x\times y$ for any $x,y\in\Re^3$. The inverse of the hat map is denoted by the \textit{vee map} $\vee:\so\rightarrow\Re^3$.

The dynamic model of the quadrotor is identical to~\cite{LeeLeoPICDC10}. The mass and the inertia matrix of the quadrotor are denoted by $m\in\Re$ and $J\in\Re^{3\times 3}$, respectively. It generates a thrust $u\in\Re^3$ given by
\begin{align}
u = -f R e_3,\label{eqn:u0}
\end{align}
with respect to the inertial frame, where $f\in\Re$ is the total thrust magnitude and $e_3=[0,0,1]^T\in\Re^3$. It also generates a moment $M\in\Re^3$ with respect to its body-fixed frame. The control input of the presented tethered quadrotor is $(f,M)$.

\subsection{Euler--Lagrange equations}

We derive a global form of the equations of motion for the tethered quadrotor via Lagrangian mechanics. The material points on the $i$-th link are parameterized as $x_{i-1} +\zeta_i q_i$ for $\zeta_i\in[0,l_i]$, and the mass of the infinitesimal element $d\zeta_i$ corresponds to $\frac{m_i}{l_i}d\zeta_i$.  Thus, the kinetic energy of the $i$-th link can be written as
\begin{align*}
\mathcal{T}_i & =\int_{0}^{l_i}\frac{1}{2} \frac{m_i}{l_i}\|\dot x_{i-1}+\zeta_i \dot q_i\|^2 d\zeta \\
&=\frac{1}{2}m_i \|\dot x_{i-1}\|^2 +\frac{1}{2}m_il_i \dot x_{i-1}\cdot \dot q_i + \frac{1}{6}m_il_i^2 \|\dot q_i\|^2. 
\end{align*}
The kinetic energy of the quadrotor is composed of the translational kinetic energy and the rotational kinetic energy,
\begin{align*}
\mathcal{T}_{quad} = \frac{1}{2}m\|\dot x_n\|^2 + \frac{1}{2}\Omega\cdot J\Omega.
\end{align*}
The total kinetic energy is $\mathcal{T}=\mathcal{T}_{quad}+\sum_{i=1}^n \mathcal{T}_i$. From \refeqn{xi}, we have $\dot x_{i} = \sum_{j=1}^{i} l_j\dot q_i$. Substituting this and rearranging, the total kinetic energy can be written as
\begin{align}
\mathcal{T} = \frac{1}{2}\sum_{i,j=1}^n M_{ij}\dot q_i\cdot \dot q_j+\frac{1}{2}\Omega\cdot J\Omega,\label{eqn:KE}
\end{align}
where the fixed inertia terms $M_{ij}\in\Re$ are defined as
\begin{align*}
M_{ii} &= \parenth{m + \frac{1}{3}m_i} l_i^2 + \sum_{p=i+1}^n m_p l_i^2,
\end{align*}
for any $1\leq i\leq n$, and the off-diagonal terms are defined for $1\leq j < i \leq n$ as
\begin{align*}
M_{ij} = M_{ji} = \parenth{m+ \frac{1}{2}m_i}l_il_j + \sum_{p=i+1}^n m_p l_i l_j.
\end{align*}


Next, the gravitational potential energy of the $i$-th link and the quadrotor are given by
\begin{align*}
\mathcal{U}_i &= -m_i g e_3 \cdot (x_{i-1}+\frac{1}{2} l_i q_i),\\
\mathcal{U}_{quad} & = -m g e_3 \cdot x_n.
\end{align*}
Therefore, the total gravitational potential is $\mathcal{U}=\mathcal{U}_{quad}+\sum_{i=1}^n \mathcal{U}_i$, and it can be written as
\begin{align}
\mathcal{U} & = -ge_3 \cdot \braces{\sum_{i=1}^n m_i\parenth{ \sum_{j=1}^{i-1}l_jq_j + \frac{1}{2}l_i q_i}+\sum_{k=1}^n m l_k q_k}\nonumber \\
& = -g e_3 \cdot \sum_{i=1}^n M_{g_i}l_i q_i,\label{eqn:U}
\end{align}
where $M_{g_i}\in\Re$ is defined as
\begin{align*}
M_{g_i} = m+\frac{1}{2}m_i +\sum_{p=i+1}^n m_p.
\end{align*}
The Lagrangian is given by $\mathcal{L}=\mathcal{T}-\mathcal{U}$ from \refeqn{KE} and \refeqn{U}.

A coordinate-free form of Lagrangian mechanics on the two-sphere $\Sph^2$ and the special orthogonal group $\SO$ for various multibody systems has been studied in~\cite{Lee08,LeeLeoIJNME09}. The key idea is representing the infinitesimal variation of $q_i\in\Sph^2$ in terms of the exponential map:
\begin{align}
\delta q_i = \frac{d}{d\epsilon}\bigg|_{\epsilon = 0} \exp (\epsilon \hat\xi_i) q_i = \xi_i\times q_i,\label{eqn:delqi}
\end{align}
for a vector $\xi_i\in\Re^3$ with $\xi_i\cdot q_i=0$. This guarantees that the infinitesimal variation is at the correct tangent space of the two-sphere, i.e., $\delta q_i\in\T_{q_i}\Sph^2$. Similarly, the variation of $R_i$ is given by $\delta R_i  = R_i\hat\eta_i$ for $\eta_i\in\Re^3$. 

By using these expressions, the equations of motion can be obtained from Hamilton's principle as follows,
\begin{gather}
J\dot\Omega + \Omega\times J\Omega = M,\label{eqn:Wdot}\\
M_{ii}\ddot q_{i} -\hat q_i^2\sum_{\substack{j=1\\j\neq i}}^n M_{ij}\ddot q_j + M_{ii}\|\dot q_i\|^2 q_i +\hat q_i^2 M_{g_i}l_i g e_3 = -\hat q_i^2l_i u,\label{eqn:qiddot}
\end{gather}
(see Appendix A). This can be rearranged as
\begin{align}
\mathcal{M}(q) \begin{bmatrix} \ddot q_1 \\ \ddot q_2 \\ \vdots \\ \ddot q_n\end{bmatrix}
+\begin{bmatrix} \mathcal{G}_1 (q,\dot q)\\
\mathcal{G}_2 (q,\dot q)\\ \vdots \\
\mathcal{G}_n (q,\dot q)\end{bmatrix}
=
\begin{bmatrix}
-l_1\hat q_1^2 u\\
-l_2\hat q_2^2 u\\
\vdots\\
-l_n\hat q_n^2 u
\end{bmatrix},\label{eqn:qiddot_M}
\end{align}
where $\mathcal{M}(q)\in\Re^{3n\times 3n}$, $\mathcal{G}_i(q,\dot q)\in\Re^3$ are defined 
as
\begin{align}
\mathcal{M}(q)& =\begin{bmatrix} M_{11}I & -\hat q_1^2 M_{12} & \cdots & -\hat q_1^2 M_{1n}\\
-\hat q_2^2 M_{21} & M_{22}I & \cdots & -\hat q_2^2 M_{2n}\\
\vdots & \vdots & & \vdots\\
-\hat q_n^2 M_{n1} & -\hat q_n^2 M_{n2} & \cdots & M_{nn}I
\end{bmatrix},\label{eqn:MMq}\\
\mathcal{G}_i(q,\dot q) & = M_{ii}\norm{\dot{q}_i}^2 q_i + \hat q_i^2 M_{g_i} l_i g e_3.\label{eqn:GGi}
\end{align}
%
Alternatively, the equation \refeqn{qiddot} can be rewritten in terms of the angular velocity as
\begin{gather}
M_{ii}\dot\omega_i -\hat q_i \sum_{\substack{j=1\\j\neq i}}^n M_{ij}(\hat q_j\dot\omega_j +\|\omega_j\|^2 q_j)-\hat q_i M_{g_i}l_i ge_3 = l_i\hat q_i u.\label{eqn:widot}
\end{gather}
Together with the kinematics equations \refeqn{dotqi} and \refeqn{dotR}, these describe the dynamics of the tethered quadrotor.

\section{Control System Design for Taut Tether}

Designing a control system for the tethered quadrotor described by \refeqn{Wdot} and \refeqn{widot} is challenging as it is highly underactuated: there are $2n+3$ degrees of freedom but only $4$ independent control inputs. In this section, we first design a control system for the special case when there is a single link, i.e., $n=1$, assuming that the cable is always taut. Instead, the tension along the tether is also controlled such that the tether is stretched even if the deformation of the tether is included in the dynamic model. The deformation of tether will be incorporated later at Section IV. Throughout this section, the subscript $1$ is removed for brevity, i.e., $q=q_1$ and $\omega=\omega_1$.

\subsection{Problem Formulation}

When $n=1$, the equation of motion \refeqn{widot} reduces to
\begin{gather}
\dot\omega -\alpha\hat q e_3 = \beta\hat q u,\label{eqn:wdot0}
\end{gather}
where the constants $\alpha,\beta$ are given by
\begin{align*}
\alpha=\frac{M_{g_1}l_1}{M_{11}}g= \frac{m+\frac{m_1}{2}}{(m+\frac{m_1}{3})l_1}g,\quad
\beta=\frac{l_1}{M_{11}}=\frac{1}{(m+m_1/3)l_1}.
\end{align*}

Next, we find the expression of the tension along the tether. Since the location of the quadrotor is given by $x=lq$, using \refeqn{dotqi} and \refeqn{wdot0}, its acceleration can be written as 
\begin{align*}
\ddot x = l \ddot q = l (-\hat q\dot\omega -\|\omega\|^2 q)
=l(-\alpha\hat q^2 e_3 -\beta\hat q^2 u -\|\omega\|^2 q).
\end{align*}
Let $\lambda\in\Re^3$ be the internal force exerted by the tether on the quadrotor. Considering the free-body diagram of the quadrotor excluding the link, from Newton's second law, $m\ddot x = u + mge_3 + \lambda$. The tension along the link, namely $T\in\Re$ corresponds to the component of the negative internal force $-\lambda$ along the direction of the link $q$, i.e., $T=-\lambda\cdot q$. Note that it is defined such that a positive tension $T$ implies that the tethered is being stretched. By combining the above two equations, 
\begin{align*}
T & = q\cdot (u+ mg e_3-m\ddot x)
= q\cdot (u+ g e_3)+ml\|\omega\|^2.
\end{align*}

In short, the equations of motion for the tethered quadrotor and the tension when $n=1$ are given by \refeqn{Wdot}, and
\begin{gather}
\dot\omega -\alpha\hat q e_3 = \beta\hat q u^\perp,\label{eqn:wdot}\\
T = mg q\cdot e_3 + ml\|\omega\|^2 + q\cdot u^\parallel,\label{eqn:T}
\end{gather}
where $u^\perp,u^\parallel\in\Re^3$ denote the component of $u$ that is perpendicular to $u$, and the other component that is parallel to $u$, respectively, given by
\begin{align}
u^\perp = (I_{3\times 3}-qq^T) u = -\hat q^2 u,\label{eqn:uperp}\\
u^\parallel = qq^T u = (I_{3\times 3}+\hat q^2) u.\label{eqn:uparalle}
\end{align}

A tracking control problem for the tethered quadrotor is formulated as follows. Suppose that a smooth desired trajectory of the direction of the link, namely $q_d(t):\Re\rightarrow\Sph^2$, and the desired tension $T_d(t):\Re\rightarrow\Re$ are given. The desired direction satisfies
\begin{align}
\dot q_d(t) = \omega_d(t) \times q_d(t),
\end{align}
for the corresponding desired angular velocity $\omega_d(t)\in\Re^3$ satisfying $\omega_d(t)\cdot q_d(t)=0$. We wish to design the control input of the quadrotor $(f,M)$ such that this desired trajectory becomes an asymptotically stable equilibrium of the controlled system. 

\subsection{Simplified Dynamic Model $(n=1)$}

The presented quadrotor is underactuated since the total thrust is always parallel to its third body-fixed axis. This can be directly observed from the expression of the total thrust given by $u=-fRe_3$. The magnitude $f$ of the total thrust and the total control moment $M$ are arbitrary. To overcome this, we first consider a simplified dynamic model where the quadrotor may generate the total thrust along any direction. This is equivalent to designing a desired total thrust $u$ based on \refeqn{wdot} and \refeqn{T} without considering its attitude dynamics \refeqn{Wdot}. This is possible as the attitude dynamics does not directly appear in the dynamics of the link at \refeqn{wdot}. The effects of the attitude dynamics will be incorporated later.

The equations of motion for the link and the tension given by \refeqn{wdot} and \refeqn{T} have the following structure: the link dynamics is controlled by the perpendicular component of the control input $u^\perp$, and the tension is controlled by the parallel component of the control input $u^\parallel$. Therefore, $u^\perp$ is designed such that the link asymptotically follows its desired direction, and $u^\parallel$ is designed for the desired tension. The resulting complete control input is obtained by combining them together.

First, we design the parallel component. Since the tension is an algebraic function of $u^\parallel$ at \refeqn{T}, it is designed as
\begin{align}
u^\parallel = (T_d - mgq\cdot e_3 - ml\|\omega\|^2) q,\label{eqn:uprl1}
\end{align}
such that the resulting tension is identical to $T_d$ always.

Next, we design $u^\perp$ for the link dynamics \refeqn{dotqi} and \refeqn{wdot} such that $q\rightarrow q_{d}$ as $t\rightarrow\infty$. Control systems for the unit-vectors on the two-sphere have been studied in~\cite{BulLew05,Wu12}. In this paper, we adopt the control system developed in terms of the angular velocity in~\cite{Wu12}. Define the tracking error variables, namely $e_{q},e_{\omega}\in\Re^3$ as 
\begin{align*}
e_{q}  = q_{d}\times q,\quad
e_{\omega}  = \omega + \hat q^2\omega_{d}.
\end{align*}
For positive constants $k_{q},k_{\omega}\in\Re$, the normal component of the control input is chosen as
\begin{align}
u^\perp & = -\frac{1}{\beta}\hat q \{-k_q e_{q} -k_{\omega}e_{\omega} -(q\cdot\omega_{d})\dot q -\hat q^2\dot\omega_d-\alpha \hat q e_3\}.\label{eqn:uperp1}
\end{align}
Note that the expression of $u^\perp$ is perpendicular to $q$ by definition. Substituting \refeqn{uperp1} into \refeqn{wdot}, and rearranging it with the facts that the matrix $-\hat q_i^2$ corresponds to the orthogonal projection to the plane normal to $q_i$ and $\hat q_i^3=-\hat q_i$, 
\begin{align}
\dot\omega & = -k_q e_{q} -k_{\omega}e_{\omega} -(q\cdot\omega_{d})\dot q -\hat q^2\dot\omega_d.\label{eqn:dotwu}
\end{align}
In short, the control input is given by
\begin{align}
u = u^\parallel + u^\perp,\label{eqn:u}
\end{align}
from \refeqn{uprl1} and \refeqn{uperp1}, for the simplified dynamic model.

\begin{prop}
Consider the simplified dynamic model described by \refeqn{dotqi}, \refeqn{wdot}, and \refeqn{T}, where $n=1$ and $u$ can be arbitrarily selected. The control input is designed as \refeqn{u}. Then, the zero equilibrium of the tracking error, $(e_q,e_\omega)=(0,0)$ is exponentially stable, and the tension is identical to its desired value, i.e, $T(t)=T_d(t)$ for any $t$. 
\end{prop}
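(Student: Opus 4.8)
The plan is to establish the two claims separately, starting with the trivial one. The tension claim is immediate: substituting the designed parallel component \refeqn{uprl1} into the tension expression \refeqn{T} and using $q\cdot u^\parallel = q\cdot(T_d - mgq\cdot e_3 - ml\|\omega\|^2)q = T_d - mgq\cdot e_3 - ml\|\omega\|^2$ (since $q\cdot q=1$) gives $T(t)=T_d(t)$ for all $t$ by construction. The substance of the proof is the exponential stability of $(e_q,e_\omega)=(0,0)$ for the closed-loop link dynamics, which by \refeqn{dotwu} reduces to analyzing $\dot\omega = -k_q e_q - k_\omega e_\omega - (q\cdot\omega_d)\dot q - \hat q^2\dot\omega_d$ together with $\dot q = \omega\times q$.

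First I would record the error dynamics. Differentiating $e_q = q_d\times q$ and using $\dot q_d = \omega_d\times q_d$, $\dot q = \omega\times q$, one obtains $\dot e_q$ in terms of $e_q$ and $e_\omega$; the key algebraic identity to verify (this is the standard computation from~\cite{Wu12}) is that $\dot e_q = \hat q^2 e_\omega$ plus terms that vanish or are bounded by $\|e_q\|$, or more precisely that along the flow the pair $(e_q,e_\omega)$ satisfies a linear-like cascade. Similarly, differentiating $e_\omega = \omega + \hat q^2\omega_d$ and substituting \refeqn{dotwu} for $\dot\omega$, the feedforward terms $-(q\cdot\omega_d)\dot q - \hat q^2\dot\omega_d$ are designed precisely to cancel the drift arising from $\frac{d}{dt}(\hat q^2\omega_d)$, leaving $\dot e_\omega = -k_q e_q - k_\omega e_\omega$ (again modulo the curvature terms handled in~\cite{Wu12}). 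I would then quote or lightly re-derive the resulting error system and note that it matches the structure analyzed in~\cite{Wu12}.

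With the error dynamics in hand, I would introduce the Lyapunov function $\mathcal{V} = \frac{1}{2}k_q\|e_q\|^2 + \frac{1}{2}\|e_\omega\|^2 + c\, e_q\cdot e_\omega$ for a small cross-term coefficient $c>0$, which is the standard choice guaranteeing exponential (not merely asymptotic) decay. I would show $\mathcal{V}$ is bounded above and below by quadratic forms in $(\|e_q\|,\|e_\omega\|)$ for $c$ sufficiently small, compute $\dot{\mathcal{V}}$ along the error dynamics, and verify that $\dot{\mathcal{V}} \leq -\gamma\mathcal{V}$ for some $\gamma>0$ provided $c$ is chosen small relative to $k_q,k_\omega$ and the bound on the admissible region (since $e_q = q_d\times q$ only controls the component of $q$ normal to $q_d$, the analysis is genuinely local and one should state the domain of attraction, e.g., $q\cdot q_d>0$, or invoke the almost-global result of~\cite{Wu12}).

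The main obstacle is the bookkeeping in the error-dynamics derivation: because $e_q=q_d\times q$ is a cross product rather than a difference, $\|e_q\|^2 = 1 - (q\cdot q_d)^2$ and one must carefully track the $q\cdot q_d$ factors and the projection operators $\hat q^2$, verifying the identities $\hat q^3 = -\hat q$ and that $-\hat q^2$ is the orthogonal projection onto $q^\perp$ at each step, to see the designed feedforward terms cancel exactly and to get clean quadratic bounds. Since this computation is carried out in~\cite{Wu12}, I would cite it for the error dynamics and the Lyapunov estimate, and the proof here can be kept short: state the closed-loop error system, invoke the cited stability result to conclude exponential stability of $(e_q,e_\omega)=(0,0)$, and combine with the already-verified tension identity to finish.
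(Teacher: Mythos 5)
Your proposal is correct and follows essentially the same route as the paper: the tension claim by direct substitution of \refeqn{uprl1} into \refeqn{T}, and exponential stability via a cross-term Lyapunov function for the $(e_q,e_\omega)$ error system, with the error-dynamics bookkeeping deferred to~\cite{Wu12}. The only cosmetic difference is that the paper uses the configuration error function $\Psi_q=1-q\cdot q_d$ (whose derivative is exactly $e_q\cdot e_\omega$) in place of your $\tfrac{1}{2}k_q\|e_q\|^2$; the two are equivalent up to constants on the local domain $\Psi_q<\psi_q<2$ that both arguments require.
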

\begin{proof}
See Appendix B.
\end{proof}

\subsection{Full Dynamic Model $(n=1)$}

The above control system for a simplified dynamics model is generalized to the full dynamics model that includes the attitude dynamics \refeqn{dotR}, \refeqn{Wdot} of the quadrotor. The control force of the full dynamic model is given by $-f R e_3$. Here, the attitude of each quadrotors is controlled such that the direction of its third body-fixed axis, $Re_3$ becomes parallel with $-u$ given at \refeqn{u}.

The corresponding attitude controller is similar with~\cite{LeeLeoPICDC10,LeeSrePICDC13}. The desired direction of the third body-fixed axis is
\begin{align}
b_{3_c} = -\frac{u}{\|u\|}.\label{eqn:b3i}
\end{align}
There is an additional one-dimensional degree of freedom in the desired attitude, corresponding to rotation about $b_{3_c}$. A desired direction of the first body-fixed axis, $b_{1_{d}}(t)\in\Sph^2$ is introduced to resolve it. The resulting desired attitude is
\begin{align}
R_{c} = \begin{bmatrix}
-\dfrac{(\hat b_{3_c})^2 b_{1_d}}{\|(\hat b_{3_c})^2 b_{1_d}\|},&
 \dfrac{\hat b_{3_c} b_{1_d}}{\|\hat b_{3_c} b_{1_d}\|}, & b_{3_c}\end{bmatrix},\label{eqn:Rc}
\end{align}
and the desired angular velocity is obtained by $\Omega_{c}=(R_{c}^T \dot R_{c})^\vee\in\Re^3$. Define the error variables for the attitude dynamics as
\begin{align*}
e_{R}  = \frac{1}{2} (R_{c}^TR - R^T R_{c})^\vee, \quad 
e_{\Omega} = \Omega - R^T R_{c} \Omega_{c}.
\end{align*}
The thrust magnitude and the moment vector of quadrotors are chosen as 
\begin{align}
f  & = -u\cdot Re_3,\label{eqn:fi}\\
M & = -\frac{k_R}{\epsilon^2} e_{R} -\frac{k_{\Omega}}{\epsilon} e_{\Omega} +\Omega\times J\Omega\nonumber\\
&\quad -J(\hat\Omega R^T R_{c} \Omega_{c} - R^T R_{c}\dot\Omega_{c}),\label{eqn:Mi}
\end{align}
where $\epsilon,k_R,k_\Omega$ are positive constants~\cite{LeeLeoPICDC10}.

\begin{prop}\label{prop:FDM}
Consider the full dynamics model defined by \refeqn{dotqi}, \refeqn{dotR}, \refeqn{Wdot}, \refeqn{wdot}, and \refeqn{T}, where $n=1$. Control inputs $(f,M)$ are designed as \refeqn{fi} and \refeqn{Mi}, where the desired control force $u$ is given by \refeqn{u}. Then, there exist $\epsilon^* >0$, such that for all $\epsilon < \epsilon^*$, the zero equilibrium of the tracking errors $(e_{q}, e_{\omega},e_{R},e_{\Omega})$ is exponentially stable, and $\lim_{t\rightarrow\infty} T(t)=T_d(t)$.
\end{prop}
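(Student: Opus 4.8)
The plan is to treat the full closed-loop system as a singular perturbation of the simplified one analyzed in Proposition~1, with the attitude error $(e_R,e_\Omega)$ as the fast variable. Recall from the proof of Proposition~1 that, when the thrust force equals the designed $u$ of \refeqn{u}, the link error $z_1\triangleq(e_q,e_\omega)$ obeys the closed loop \refeqn{dotwu} whose origin is exponentially stable, certified by a Lyapunov function $\mathcal{V}_1(z_1)$ with $\dot{\mathcal{V}}_1\leq -c_1\norm{z_1}^2$ on a sublevel set. In the full model the realized thrust is $-fRe_3=(u\cdot Re_3)Re_3$, the orthogonal projection of $u$ onto $b_3=Re_3$; writing $-fRe_3=u-X$ with $X\triangleq u-(u\cdot Re_3)Re_3=\norm{u}\,(I-b_3b_3^T)b_{3_c}$, this error vanishes exactly when $b_3$ is aligned with $b_{3_c}=-u/\norm{u}$ and, near that configuration, satisfies a bound $\norm{X}\leq c_3\norm{u}\,\norm{e_R}$. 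Since $\hat q u=\hat q u^\perp$ and $\hat q X=\hat q X^\perp$, replacing $u$ by $-fRe_3$ in \refeqn{wdot} reproduces the nominal closed loop \refeqn{dotwu} plus the additive perturbation $-\beta\hat q X$, so that $\dot{\mathcal{V}}_1\leq -c_1\norm{z_1}^2+c_4\norm{z_1}\norm{X}\leq -c_1\norm{z_1}^2+c_5\norm{z_1}\norm{z_2}$ once $\norm{u}$ is bounded along trajectories.

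Next I would analyze the attitude subsystem. With $(f,M)$ as in \refeqn{fi} and \refeqn{Mi}, the error dynamics of $z_2\triangleq(e_R,e_\Omega)$ is — up to the coupling through the command $b_{3_c}$ and its rates $\Omega_c=(R_c^T\dot R_c)^\vee$, $\dot\Omega_c$ — exactly the geometric attitude tracking system of~\cite{LeeLeoPICDC10}; with the $1/\epsilon$-scaled gains there is a quadratic Lyapunov function $\mathcal{V}_2(z_2)$ (with a small cross term between $e_R$ and $e_\Omega$) satisfying $\dot{\mathcal{V}}_2\leq -\tfrac{c_6}{\epsilon}\norm{z_2}^2+c_7\norm{z_2}\,(\norm{z_1}+\norm{z_2})$, where the extra terms collect the dependence of $R_c$, $\Omega_c$, $\dot\Omega_c$ on $u,\dot u,\ddot u$, which via the link dynamics \refeqn{wdot} are bounded functions of $z_1$, the errors, and the (smooth, bounded) desired trajectory. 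This step requires checking that $R_c$ in \refeqn{Rc} and the commanded rates are well defined and bounded, which holds provided $\norm{u}$ stays away from zero — a mild condition on the desired tension $T_d$ and on the size of the errors on the sublevel set under consideration.

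Then I would form the composite Lyapunov function $\mathcal{V}=\mathcal{V}_1+\mathcal{V}_2$ on a sublevel set where both subsystem estimates are valid. Combining the two bounds yields
\begin{align*}
\dot{\mathcal{V}}\leq -\begin{bmatrix}\norm{z_1}&\norm{z_2}\end{bmatrix} W \begin{bmatrix}\norm{z_1}\\ \norm{z_2}\end{bmatrix},\qquad W=\begin{bmatrix} c_1 & -\tfrac{1}{2}(c_5+c_7)\\[1mm] -\tfrac{1}{2}(c_5+c_7) & \tfrac{c_6}{\epsilon}-c_7\end{bmatrix}.
\end{align*}
Since the $(2,2)$ entry grows like $1/\epsilon$, there is $\epsilon^*>0$ such that $W\succ0$ for all $\epsilon<\epsilon^*$; hence $\dot{\mathcal{V}}\leq -c_8\norm{(z_1,z_2)}^2$, and after verifying that the chosen sublevel set is invariant for such $\epsilon$ one concludes exponential stability of $(e_q,e_\omega,e_R,e_\Omega)=0$. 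Finally, the realized parallel thrust is $q\cdot(-fRe_3)=q\cdot u-q\cdot X$, and $u^\parallel$ was designed in \refeqn{uprl1} precisely so that $mg\,q\cdot e_3+ml\norm{\omega}^2+q\cdot u=T_d$; therefore the realized tension is $T=T_d-q\cdot X$, and exponential decay of $e_R$ together with boundedness of $u$ gives $X\to0$, so $\lim_{t\to\infty}T(t)=T_d(t)$.

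The hard part will be the singular-perturbation bookkeeping in the last two steps: uniformly bounding the command derivatives $\Omega_c,\dot\Omega_c$ (which depend on $\dot u,\ddot u$ and hence close back on the link dynamics), and pinning down a sublevel set / region of attraction on which all the cross-term estimates and the condition $\norm{u}\geq\text{const}>0$ hold simultaneously, so that $W\succ0$ genuinely produces exponential stability rather than merely a local attractivity statement.
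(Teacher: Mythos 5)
Your proposal is correct and follows the same route as the paper: a singular-perturbation (two-time-scale) argument in which the attitude loop with the $1/\epsilon$-scaled gains is the fast boundary-layer system and the link dynamics under $R=R_c$ (where $-fRe_3=u$) is the reduced system already handled by Proposition~1. The only difference is one of packaging --- you conclude via an explicit composite Lyapunov function and the interconnection matrix $W$ becoming positive definite for small $\epsilon$, whereas the paper simply invokes Tikhonov's theorem and the cited attitude-tracking results; your explicit treatment of the perturbation $X=(I-b_3b_3^T)u$ and of the tension limit $T=T_d-q\cdot X\to T_d$ fills in details the paper leaves implicit.
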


\begin{proof}
See Appendix C.
\end{proof}

\subsection{Numerical Examples}\label{sec:NE1}

Compared with the prior results where the motion of the quadrotor and the tether is restricted to a two-dimensional plane, the proposed control system guarantees that the quadrotor and the tether asymptotically follows a desired trajectory in the three-dimensional space, while controlling the tension along the tether. These are illustrated by a numerical example as follows.

Properties of the quadrotor are chosen as $m=0.755\,\mathrm{kg}$ and $J=\mathrm{diag}[0.0043,0.0043,0.0103]\,\mathrm{kgm^2}$. The total mass and the total length of the tether are $0.3\,\mathrm{kg}$ and $5\,\mathrm{m}$, respectively. Initially, the tether is aligned along a horizontal direction with zero angular velocity, i.e., $q(0)=e_1, \omega(0)=0$. The desired trajectory is chosen such that the quadrotor follows a figure-eight curve on the sphere, 
\begin{gather*}
q_d(t)=[\cos\theta\cos\phi,\, \sin\theta,\, -\cos\theta\sin\phi]^T,\\
\theta(t)=\frac{\pi}{6}\sin 0.2\pi t,\quad \phi(t)=\frac{\pi}{18}\sin 0.4\pi t + \frac{\pi}{2},
\end{gather*}
and the desired tension is $T_d=5\,\mathrm{N}$. The corresponding simulation results are presented at Figure \ref{fig:1}, where it is shown that the tracking errors converge to zero. 

\begin{figure}
\centerline{
	\subfigure[Snapshots of maneuver]{
		\setlength{\unitlength}{0.1\columnwidth}\scriptsize
		\begin{picture}(4.6,4.9)(0,0.6)
		\put(0,0){\includegraphics[width=0.45\columnwidth]{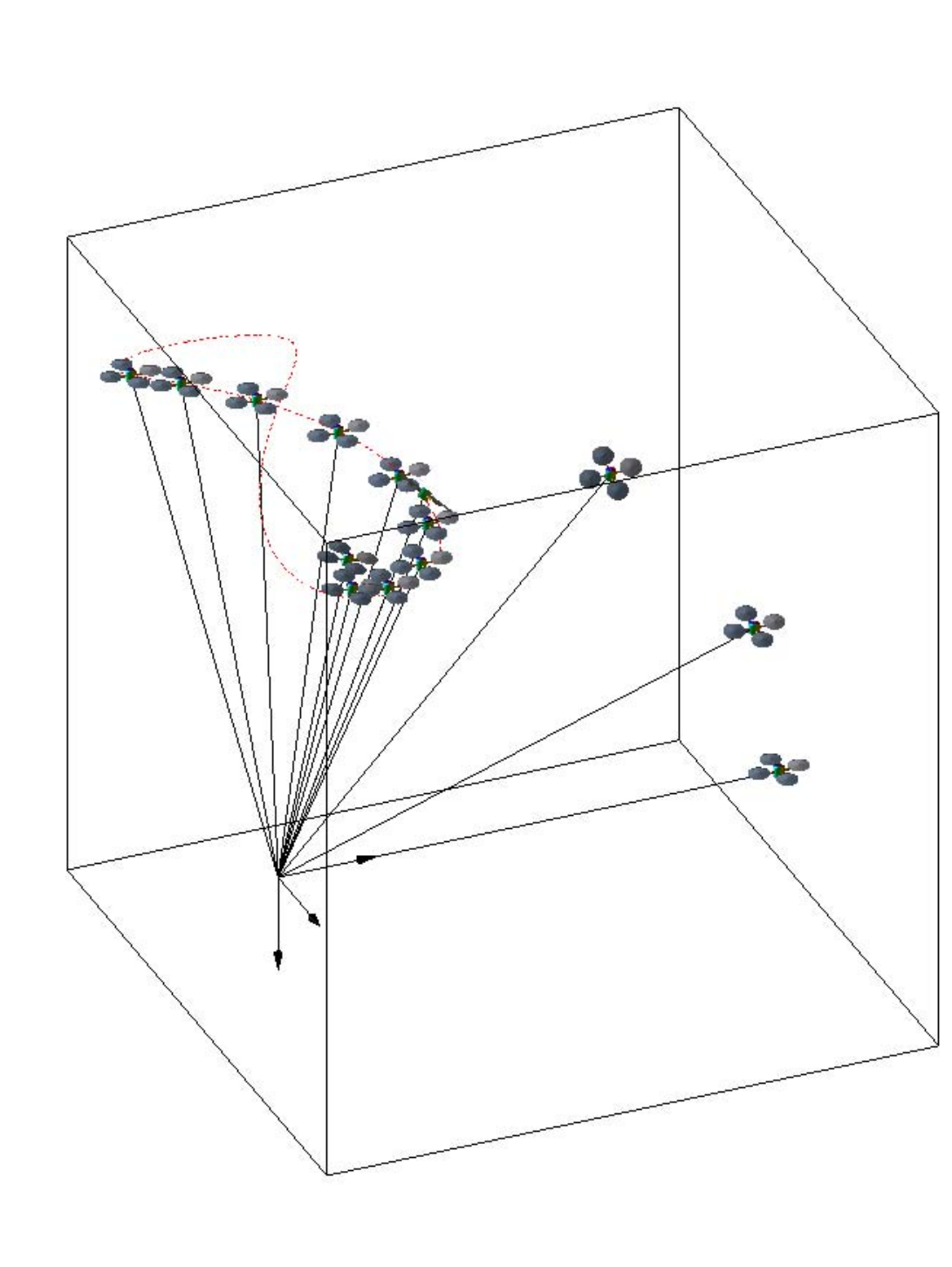}}
		\put(3.3,2){$t=0$}
		\put(2.5,4){$t=0.9$}
		\put(0.3,4.5){$t=6$}
		\end{picture}}
	\subfigure[Quadrotor position ($x_d$:red, $x$:blue)]{
		\includegraphics[width=0.45\columnwidth]{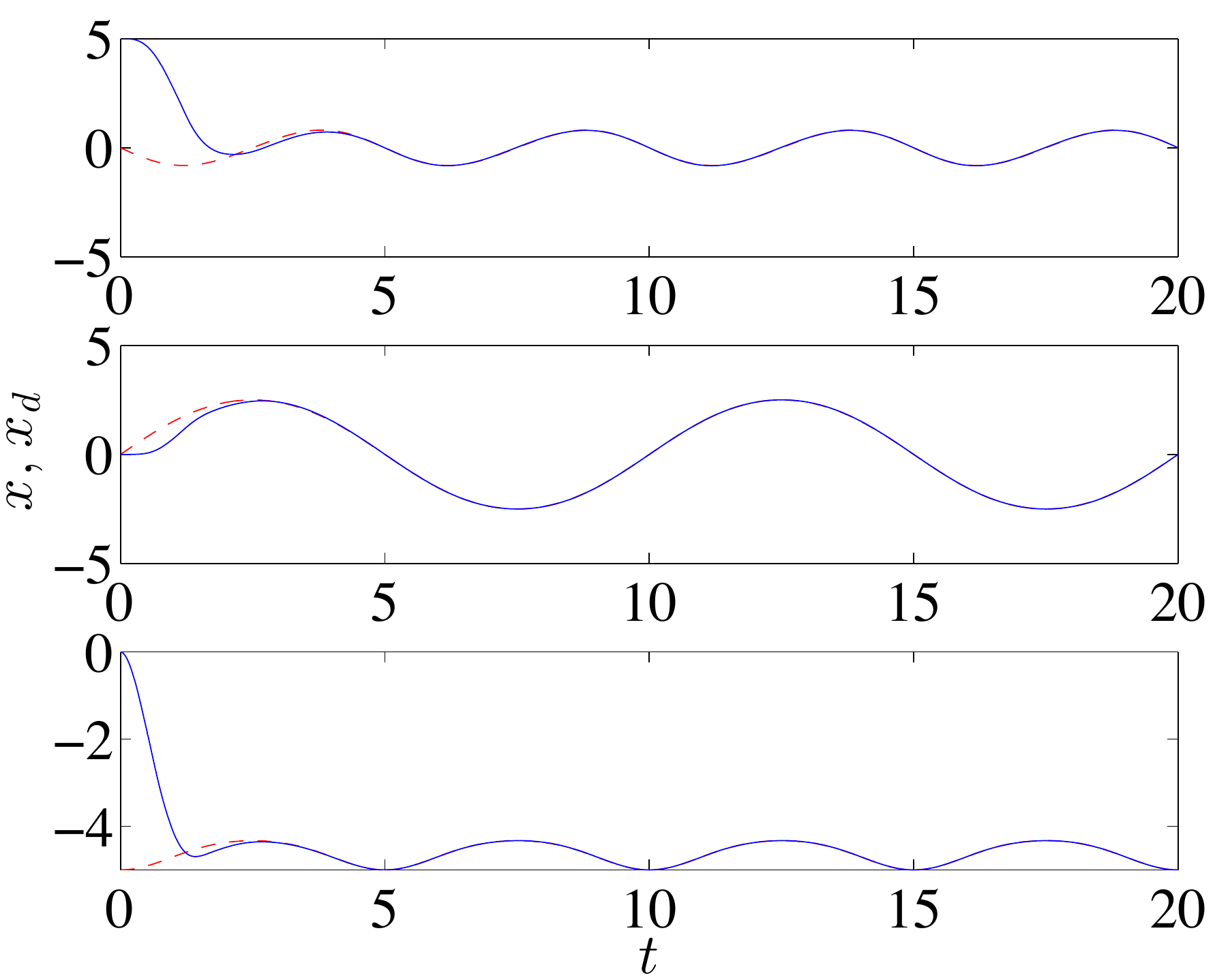}}
}		
\centerline{
	\subfigure[Tether direction error $e_q$]{
		\includegraphics[width=0.45\columnwidth]{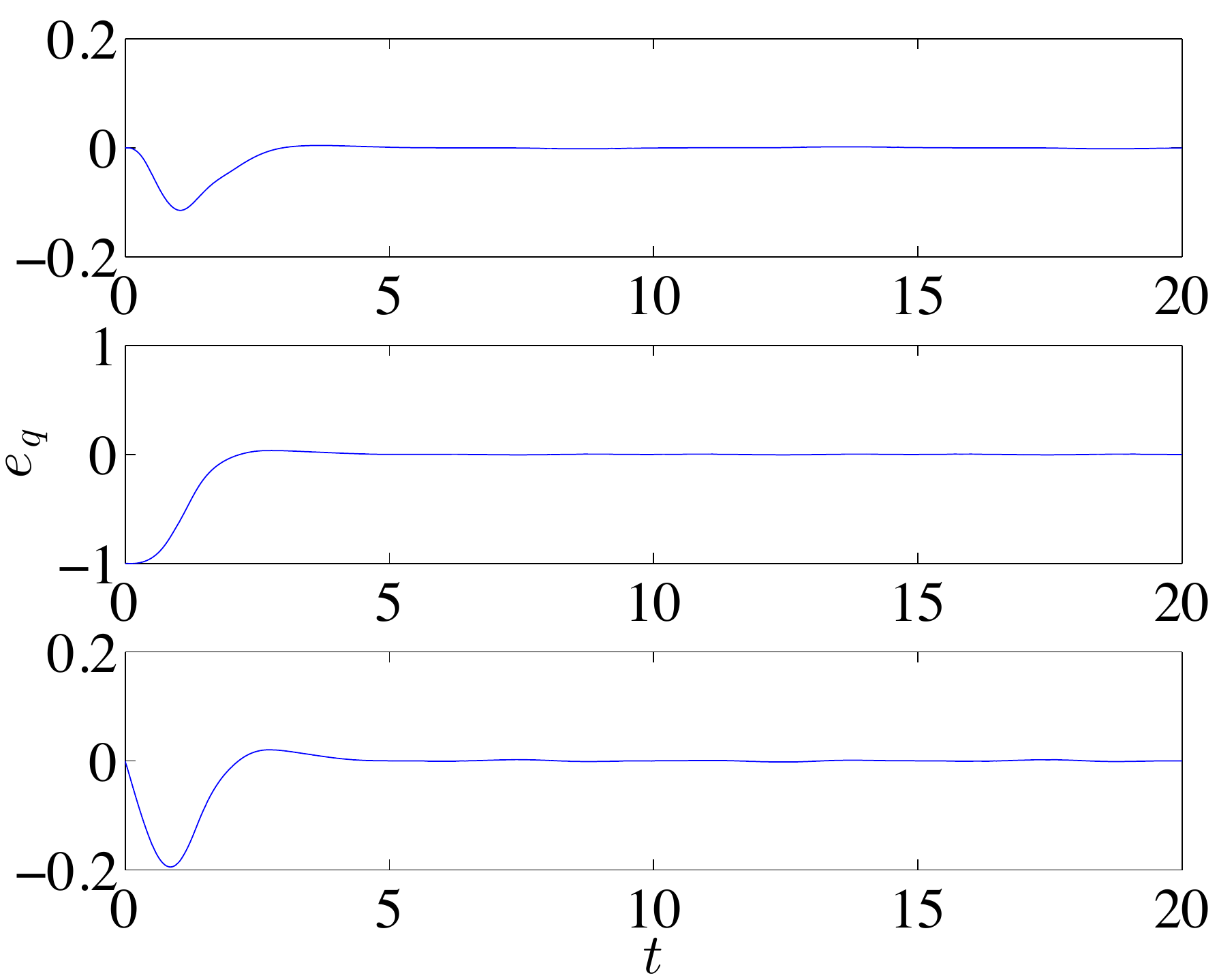}}
	\subfigure[Quadrotor attitude error $e_R$]{
		\includegraphics[width=0.45\columnwidth]{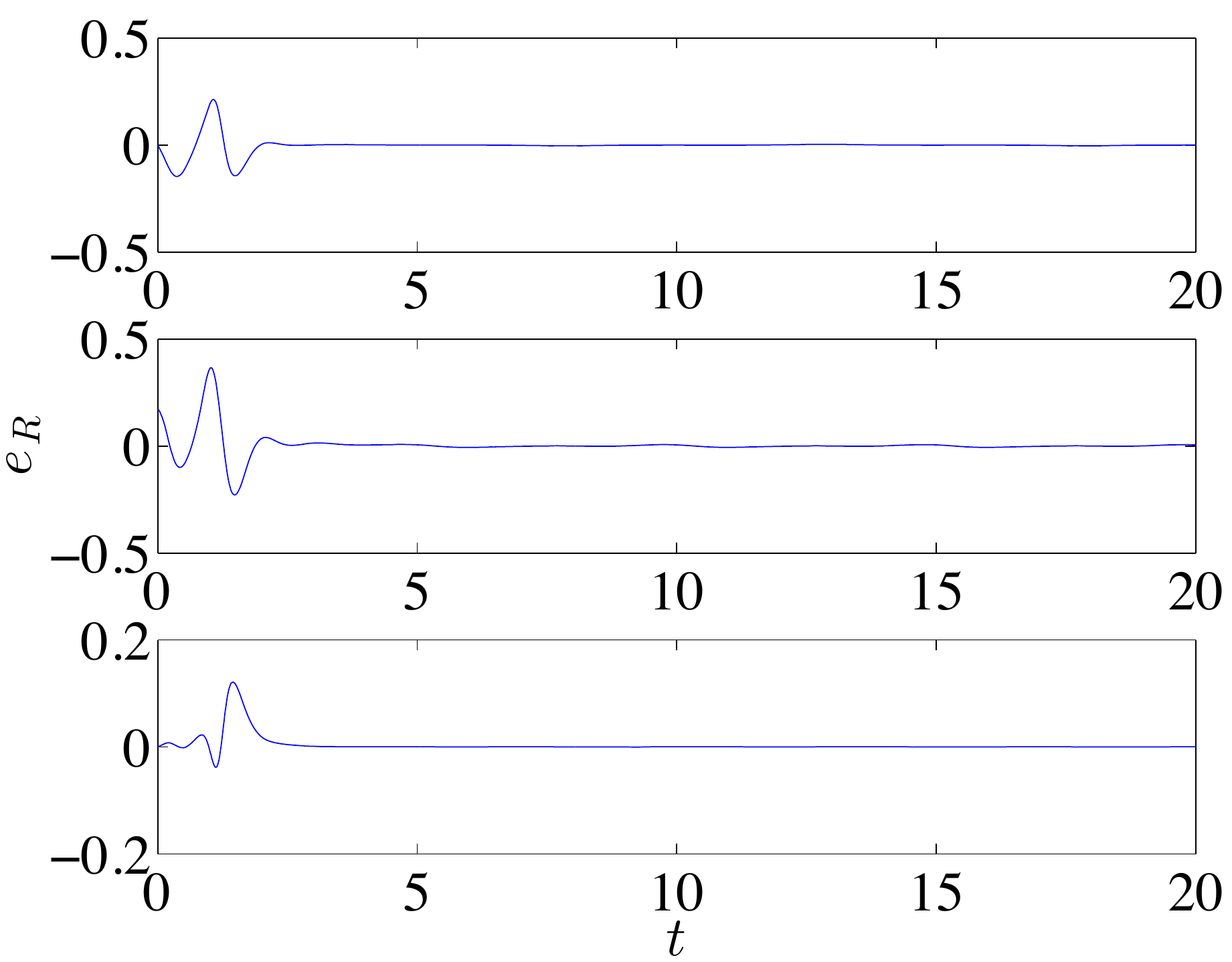}}
}
\caption{Numerical results for $n=1$ (animation available at \url{http://fdcl.seas.gwu.edu/CDC15_Fig2.mov})}\label{fig:1}
\end{figure}

Next, we apply the presented control system developed for $n=1$ into the dynamic model of a flexible tether with $n=5$. This is justified with the assumption that the tether remains taut even for the flexible tether model, if the tension is sufficiently large~\cite{LupDAnPIICIRS13,NicNalPIWC14,TogFraPIICRA15}. When computing the control input, the direction of the \textit{taut} tether is approximated by the direction from the origin to the quadrotor, i.e., $q=\frac{x}{\norm{x}}$.

Numerical results when $T_d=10\,\mathrm{N}$ are illustrated at Figure \ref{fig:2}, where the snapshots of the controller maneuvers, and the positions of the first, the third, and the last links are presented. While the position of the last link that corresponds to the position of the quadrotor follows the desired position relatively well, there are nontrivial lateral vibrations at the first link and the third link. To reduce the vibrations, the desired tension is increased to $T_d=20\,\mathrm{N}$ at Figure \ref{fig:2_T20}. However, there still exist persistent vibrations as illustrated by Figure 4.(b) and animation.

\begin{figure}
\centerline{
	\subfigure[Snapshots of maneuver]{
		\setlength{\unitlength}{0.1\columnwidth}\scriptsize
		\begin{picture}(4.6,4.9)(0,0.6)
		\put(0,0){\includegraphics[width=0.45\columnwidth]{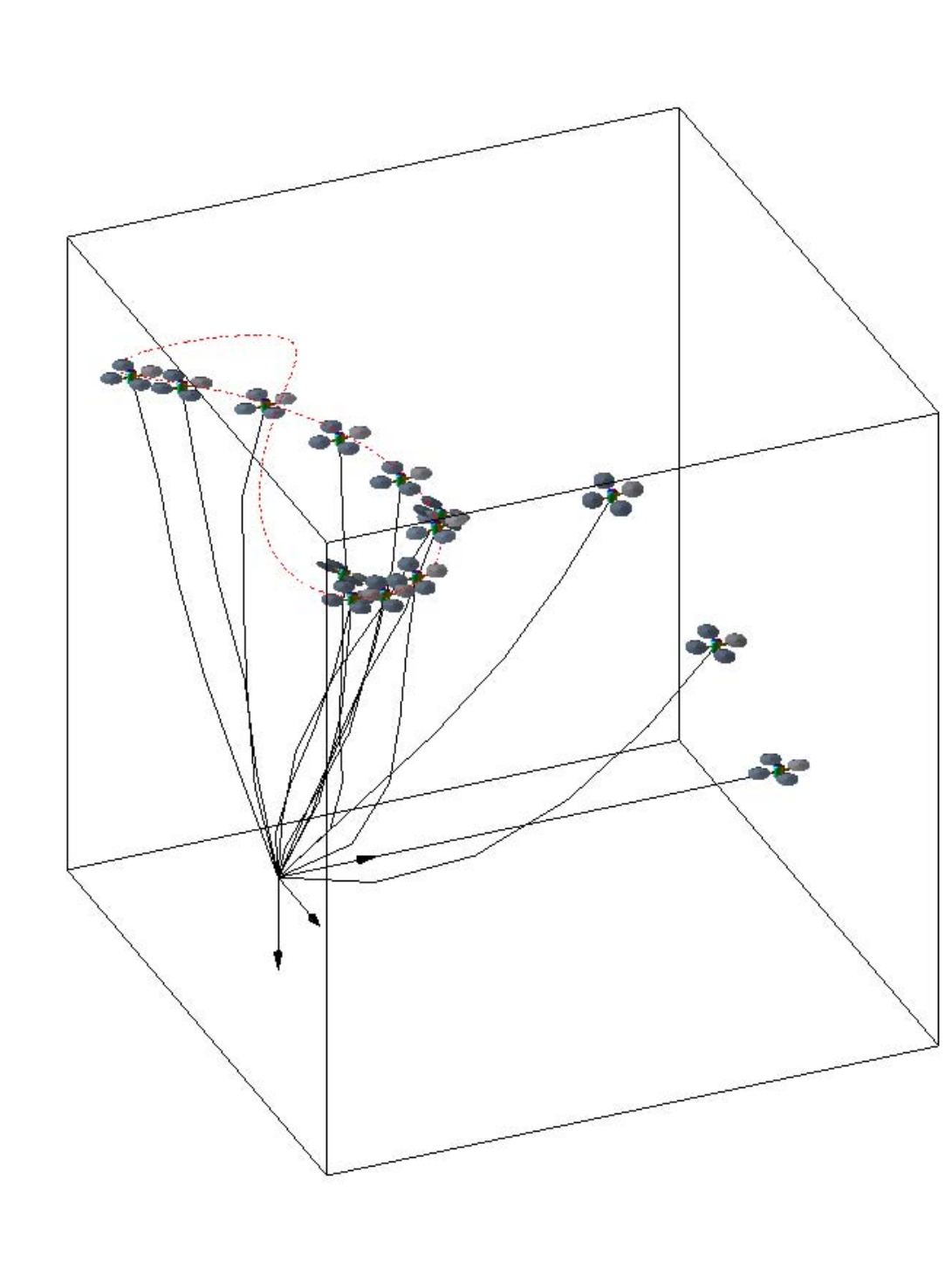}}
		\put(3.3,2){$t=0$}
		\put(2.5,4){$t=0.9$}
		\put(0.3,4.5){$t=6$}
		\end{picture}}
	\subfigure[Position of selected links $x_1,x_3,x_5=x$]{
		\includegraphics[width=0.45\columnwidth]{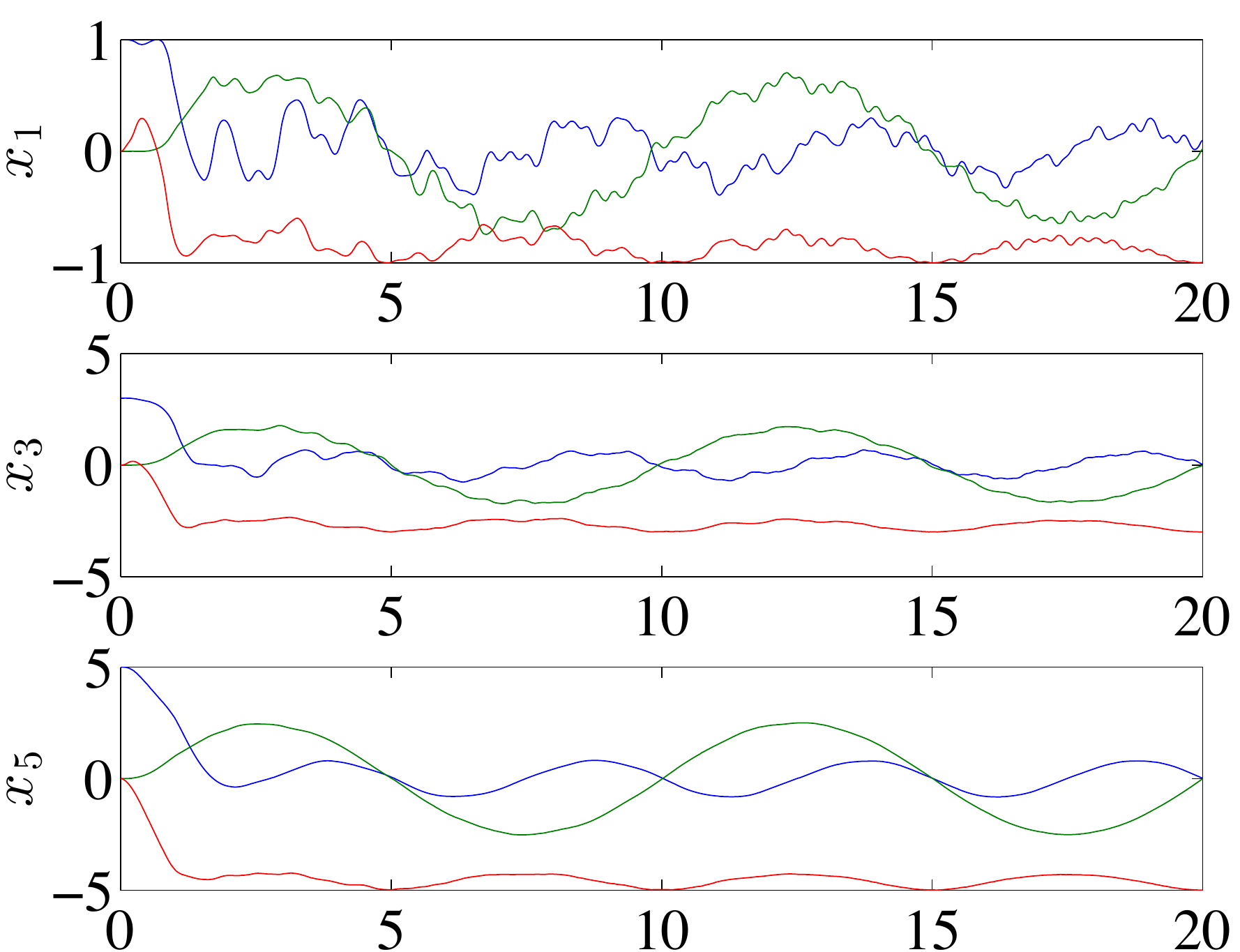}}
}
\caption{Numerical results for $n=5$, $T_d=10\,\mathrm{N}$
(animation available at \url{http://fdcl.seas.gwu.edu/CDC15_Fig3.mov})}\label{fig:2}		
\end{figure}
\begin{figure}
\centerline{
	\subfigure[Snapshots of maneuver]{
		\setlength{\unitlength}{0.1\columnwidth}\scriptsize
		\begin{picture}(4.6,4.9)(0,0.6)
		\put(0,0){\includegraphics[width=0.45\columnwidth]{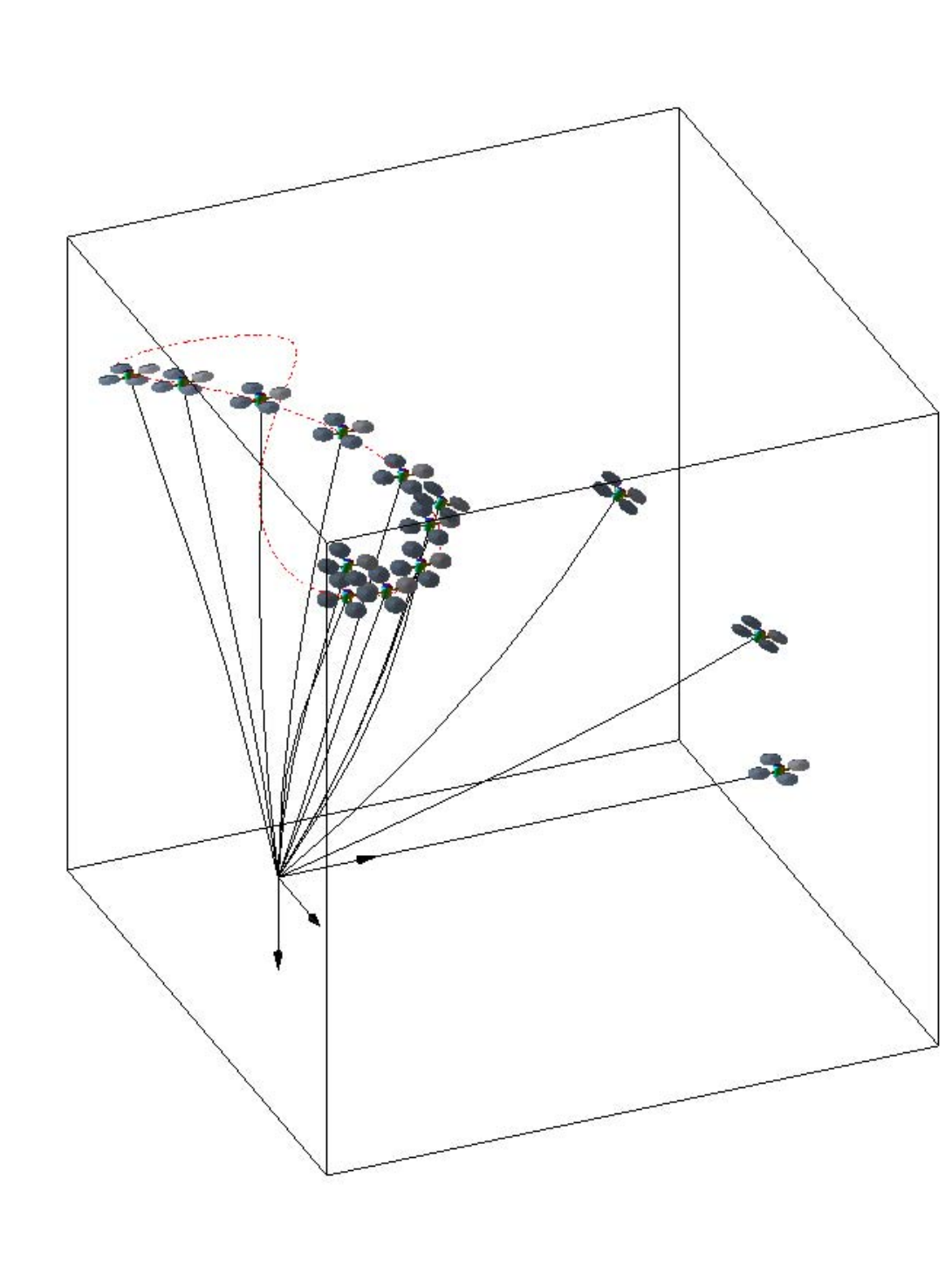}}
		\put(3.3,2){$t=0$}
		\put(2.5,4){$t=0.9$}
		\put(0.3,4.5){$t=6$}
		\end{picture}}
	\subfigure[Position of selected links $x_1,x_3,x_5=x$]{
		\includegraphics[width=0.45\columnwidth]{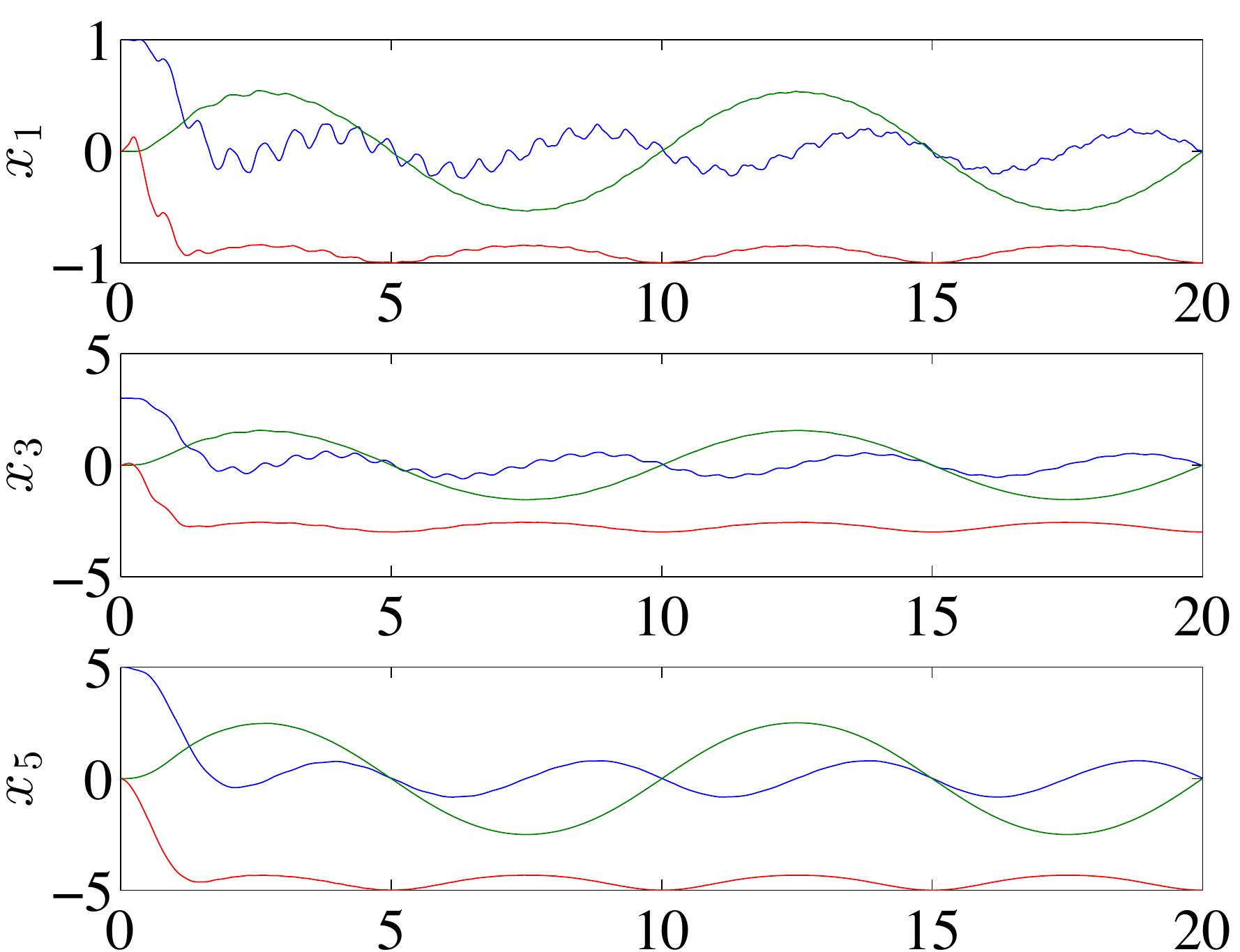}}
}
\caption{Numerical results for $n=5$, $T_d=20\,\mathrm{N}$ (animation available at \url{http://fdcl.seas.gwu.edu/CDC15_Fig4.mov})}\label{fig:2_T20}				
\end{figure}


\section{Control System Design for Flexible Tether}

The control system designed at the previous section can excite the lateral vibration of the tether when applied to the flexible tether model, and it may require increasing the tension of the tether unnecessarily large to avoid vibrations in certain cases. Motivated by these, in this section, we design another control system for the tethered quadrotor while explicitly incorporating the dynamics of a flexible tether. For simplicity, the desired configuration is selected as $q_{i_d}(t)= -e_3$, i.e., all of the links are aligned along the direction of the gravity, and the quadrotor is located directly over the pivot point at $x_d = \sum_{i=1}^n -l_i e_3$.

\subsection{Simplified Dynamic Model $(n>1)$}

Similar to the prior section, we first consider the simplified dynamic model where the total thrust $u$ can be arbitrarily selected. The proposed control system is composed of two parts: an output tracking controller to translate the quadrotor position $x$ into the vicinity of $x_d$, and a control system to asymptotically stabilize the desired configuration. 

\subsubsection{Tracking Control for Quadrotor Position}
Here, we design a control input $u$ such that the position of the quadrotor, namely $x\in\Re^3$ is translated into an intermediate point $(1-\delta)x_d$ for a constant $0<\delta <1$. Note that when $\delta$ is small, the intermediate point becomes closer to the actual desired position $x_d$. Also, $(1-\delta)x_d$ belongs to the set, 
\begin{align}
D_x = \{ x\in\Re^3 \,|\, \|x\| < \sum_{i=1}^n l_i\}, \label{eqn:Dx}
\end{align}
which is the sphere centered at the origin whose length is strictly less than the total length of tether. 

From \refeqn{qiddot_M}, $\ddot q_i$ can be written as
\begin{align}
\ddot q_i = \sum_{j=1}^n M^I_{ij}(q)\parenth{-\mathcal{G}_j(q,\dot q)-l_j\hat q_j^2 u},\label{eqn:qiddot1}
\end{align}
where $M^I_{ij}(q)\in\Re^{3\times 3}$ denotes the $(i,j)$-th block of the inverse of the $\mathcal{M}(q)$ given at \refeqn{MMq}, and $\mathcal{G}_j(q,\dot q)\in\Re^3$ is defined at \refeqn{GGi}.
Since the position of the quadrotor is $x=\sum_{i=1}^n l_i q_i$, its acceleration can be written as
\begin{align}
\ddot x 
& = -\sum_{i,j=1}^n M^I_{ij}(q)l_i\mathcal{G}_j(q,\dot q) - \parenth{\sum_{i,j=1}^n M^I_{ij}(q)l_i l_j\hat q_j^2} u\nonumber\\
&\triangleq -\mathcal{F}(q,\dot q) - \mathcal{B}(q) u,\label{eqn:xddotFT}
\end{align}
where $\mathcal{F}(q,\dot q)\in\Re^{3}$ and $\mathcal{B}(q)\in\Re^{3\times 3}$. 

\begin{assump}
The matrix $\mathcal{B}(q)$ is invertible for any configuration $q_i\in\Sph^2$ of the tether chosen such that $x=\sum_{i=1}^n l_iq_i\in D_x$.
\end{assump}
This assumption is justified by the fact that there is no restriction on the acceleration of the quadrotor in $D_x$, and therefore it can be arbitrarily changed by the control force $u$, according to Newton's second law of motion. When the quadrotor is on the boundary of $D_x$, i.e., when all of $q_i$ is identical such that the tether is taut, the control input cannot generate any acceleration along $q_i$, due to the constraints that the total length of the tether is fixed. This can also be observed from the fact that when all of $q_i$ are identical, the matrix $\mathcal{B}(q)$ has a null space spanned by $q_i$, i.e., the component of the control force $u$ parallel to $q_i$ does not affects the acceleration $\ddot x$ when all of $q_i$ are identical.

Define a desired trajectory $y_d(t)$ as
\begin{align}
y_d(t) = x(0)e^{-\gamma t} + (1-\delta) (1-e^{-\gamma t})x_d ,
\end{align}
for $\gamma >0$. This satisfies $y_d(t)=x(0)$ and $\lim_{t \rightarrow \infty} = (1-\delta) x_d$. Also, $y_d(t)\in D_x$ for any $t\geq 0$, if $x(0)\in D_x$ due to the convexity of $D_x$. In other words, $y_d(t)$ corresponds to a parameterized line connecting the initial point $x(0)$ and the intermediate point $(1-\delta)x_d$.

Let the position tracking error be $e_x=x-y_d\in\Re^3$. The control input is designed according to output feedback linearization as
\begin{align}
u = -\mathcal{B}^{-1}(q)\braces{\mathcal{F}(q,\dot q)-k_x e_x - k_{\dot x} \dot e_x + \ddot y_d},\label{eqn:uFT}
\end{align}
for positive gains $k_x,k_{\dot x}$. 

\begin{prop}
Consider the simplified tethered quadrotor model described by \refeqn{qiddot}, where $n>1$ and $u$ can be arbitrarily selected, with Assumption 1. If the initial position $x(0)$ belongs to the domain $D_x$ given at \refeqn{Dx}, then there exist controller gains $k_x,k_{\dot x}$ such that $x(t)\in D_x$ for all $t\geq 0$, and $(e_x,\dot e_x)=(0,0)$ is exponentially stable. 
\end{prop}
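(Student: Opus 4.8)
The plan is to exploit that \refeqn{uFT} is an exact feedback linearization of the quadrotor-position subsystem, and then close an invariance argument for $D_x$. Substituting \refeqn{uFT} into \refeqn{xddotFT}, the terms $\mathcal{F}(q,\dot q)$ and $\mathcal{B}(q)$ cancel and the position error obeys the linear, decoupled dynamics
\begin{align*}
\ddot e_x + k_{\dot x}\dot e_x + k_x e_x = 0,
\end{align*}
valid on every time interval on which $x(t)\in D_x$, so that $\mathcal{B}^{-1}(q)$ in \refeqn{uFT} is well defined by Assumption 1. In state form $\dot z = Az$ with $z=(e_x,\dot e_x)$ and $A$ Hurwitz for all $k_x,k_{\dot x}>0$, so $(e_x,\dot e_x)=(0,0)$ is a globally exponentially stable equilibrium of this closed-loop error system. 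Moreover $e_x(0)=x(0)-y_d(0)=0$, hence $z(0)=(0,\dot e_x(0))$ with $\dot e_x(0)=\dot x(0)-\gamma\bigl((1-\delta)x_d-x(0)\bigr)$ a fixed vector. Choosing the critically damped gains $k_{\dot x}=2\sqrt{k_x}$ yields the explicit transient bound $\|e_x(t)\|\le c(k_x)\|\dot e_x(0)\|$ for all $t\ge 0$, where $c(k_x)=1/(\mathrm{e}\sqrt{k_x})\to 0$ as $k_x\to\infty$.

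Next I would show $x(t)$ never leaves $D_x$. Since $y_d(t)$ is a convex combination of $x(0)\in D_x$ and $(1-\delta)x_d$, and $\|x_d\|=\sum_i l_i$, the triangle inequality gives $\|y_d(t)\|\le R_0:=\max\{\|x(0)\|,(1-\delta)\sum_i l_i\}<\sum_i l_i$ for all $t\ge 0$. Let $[0,T^\ast)$ be the maximal interval on which the closed-loop solution exists with $x(t)\in D_x$; it is nonempty because $D_x$ is open. On $[0,T^\ast)$ the transient bound applies, so $\|x(t)\|\le\|y_d(t)\|+\|e_x(t)\|\le R_0+c(k_x)\|\dot e_x(0)\|$. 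Picking $k_x$ large enough that $c(k_x)\|\dot e_x(0)\|<\sum_i l_i-R_0$ keeps $\|x(t)\|$ bounded away from $\sum_i l_i$ on $[0,T^\ast)$, i.e., $x(t)$ stays in a compact subset of $D_x$.

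It remains to rule out $T^\ast<\infty$. On $[0,T^\ast)$ the quantities $e_x,\dot e_x$ are bounded, the $q_i$ lie in the compact set $\Sph^2$, and $x(t)$ stays in a compact subset of $D_x$, so $\mathcal{B}^{-1}(q)$ and the blocks $M^I_{ij}(q)$ are uniformly bounded there; the only state component not yet controlled is the internal tether velocity $\dot q$. For that I would use the work--energy identity $\frac{d}{dt}(\mathcal{T}+\mathcal{U})=u\cdot\dot x$ with $\mathcal{T},\mathcal{U}$ from \refeqn{KE} and \refeqn{U}: on the compact configuration set $\dot x=\dot e_x+\dot y_d$ is bounded and, from \refeqn{uFT} and \refeqn{GGi}, $\|u\|$ grows at most linearly in $\mathcal{T}$; Gr\"onwall's inequality then bounds $\mathcal{T}(t)$ on any finite interval, so $\dot q(t)$ cannot blow up in finite time. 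Hence the trajectory remains in a compact subset of the open set on which the closed-loop vector field is smooth, and the standard escape lemma contradicts $T^\ast<\infty$. Therefore $x(t)\in D_x$ for all $t\ge 0$, the linear error dynamics holds globally in time, and $(e_x,\dot e_x)=(0,0)$ is exponentially stable.

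The crux, and the only place where $x(0)\in D_x$ and the freedom in the gains are essential, is the coupling between well-posedness of the feedback-linearizing law (which requires $x\in D_x$) and the transient of $e_x$: the argument closes precisely because $e_x(0)=0$, so overshoot is the sole threat to invariance of $D_x$, and it is made arbitrarily small by increasing $k_x$ with $k_{\dot x}=2\sqrt{k_x}$. The energy--Gr\"onwall step excluding finite-time blow-up of the unactuated tether velocities is a secondary technicality.
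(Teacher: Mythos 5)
Your proof is correct, and its core is the same as the paper's: after exact feedback linearization the error obeys $\ddot e_x=-k_xe_x-k_{\dot x}\dot e_x$, and since $e_x(0)=0$ the only threat to invariance of $D_x$ is the transient driven by $\dot e_x(0)$, which is defeated by the gain choice together with the convexity bound $\|y_d(t)\|\le\max\{\|x(0)\|,(1-\delta)\sum_i l_i\}<\sum_i l_i$. The paper obtains the transient bound from the Lyapunov function $\mathcal{U}=\frac{1}{2}\|\dot e_x\|^2+\frac{k_x}{2}\|e_x\|^2$, which gives $\|e_x(t)\|\le\frac{1}{\sqrt{2k_x}}\|\dot e_x(0)\|$ for any $k_{\dot x}>0$, whereas you use the explicit critically damped solution with $k_{\dot x}=2\sqrt{k_x}$; both bounds shrink as $k_x\to\infty$, and your reading (take $k_x$ \emph{large}) is the right one --- the paper's phrase ``if $k_x$ is sufficiently small'' contradicts its own inequality and is evidently a typo. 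Where you genuinely go beyond the paper is the continuation argument: the paper tacitly assumes the closed-loop solution exists for all $t\ge 0$ so that the linear error dynamics holds globally, while you close this loop by working on the maximal interval, showing $x(t)$ stays in a compact subset of $D_x$ there (so $\mathcal{B}^{-1}(q)$ and $M^I_{ij}(q)$ are uniformly bounded by continuity and Assumption 1), and excluding finite escape time of the unactuated tether velocities via the work--energy identity and Gr\"onwall. That step is needed for a fully rigorous proof, since the feedback-linearizing law is only defined while $x\in D_x$ and $u$ contains terms quadratic in $\dot q$ through $\mathcal{F}(q,\dot q)$; the paper simply omits it.
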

\begin{proof}
See Appendix D.
\end{proof}

\subsubsection{Stabilization for Tether} The above tracking control system guarantees that the quadrotor is translated into the intermediate point $(1-\delta)x_d$ that is arbitrarily close to the actual desired point $x_d$. But, it does not guarantee that the motion of the tether is asymptotically damped out. Therefore, we introduce another control system that stabilizes the tether as well as the quadrotor. Due to the high degrees of underactuation, it is designed based on the linearized dynamics. 

At the desired equilibrium configuration, we have $q_i = -e_3$, $\omega_i=0$ and $u_d = -m_T g e_3$, where $m_T=m+\sum_{i=1}^n m_i$ denotes the total mass of the links and the quadrotor. An intrinsic formulation of the linearized equations on $\Sph^2$ has been developed in~\cite{LeeLeoPICDC12}. According to it, the variations from the equilibrium can be written as
\begin{align}
q_i^\epsilon = -\exp(\epsilon \hat \xi_i)e_3,\quad \omega_i^\epsilon = \epsilon\delta\omega_i,\label{eqn:qieps}
\end{align}
where $\xi_i,\delta\omega_i\in\Re^3$ with $\xi_i\cdot e_3=0$ and $\delta\omega_i\cdot e_3=0$. This yields the following infinitesimal variation $\delta q_i = -\xi_i\times e_3$.

Substituting this into \refeqn{dotqi}, \refeqn{widot} and ignoring the higher order terms, the linearized equations can be written as
\begin{align}
\mathbf{M} \ddot{\mathbf{x}} + \mathbf{G}\mathbf{x} = \mathbf{B} \delta u,\label{eqn:LEOM}
\end{align}
where $\mathbf{x}=[C^T\xi_1; C^T\xi_2;\ldots; C^T\xi_n]\in\Re^{2n}$ corresponds to the state vector of the linearized dynamics with $C=[e_1,e_2]\in\Re^{3\times 2}$, $e_1=[1,0,0]^T$, and $e_2=[0,1,0]\in\Re^3$. The control input for the linearized dynamics is $\delta u = u-u_d\in\Re^3$, and the matrices $\mathbf{M},\mathbf{G}\in\Re^{2n\times 2n}$, $\mathbf{B}\in\Re^{2n\times 3}$ are defined as (see Appendix E)
\begin{align*}
\mathbf{M}&=\begin{bmatrix}%
	M_{11}I_{2\times 2} & M_{12} I_{2\times 2} & \cdots & M_{1n}I_{2\times 2}\\%
	M_{21} I_{2\times 2} & M_{22} I_{2\times 2} & \cdots & M_{2n}I_{2\times 2}\\%
	\vdots & \vdots & & \vdots\\
	M_{n1}I_{2\times 2} & M_{n2}I_{2\times 2} & \cdots & M_{nn} I_{2\times 2}
    \end{bmatrix},\\
\mathbf{G}&=\mathrm{diag}[(m_T-M_{g_1})gl_1I_{2\times 2},\ldots, (m_T-M_{g_n})gl_nI_{2\times 2}],\\
\mathbf{B}&=    \begin{bmatrix}
	-l_1 C^T \hat e_3\\
	-l_2 C^T \hat e_3\\
	\vdots\\
	-l_n C^T \hat e_3\\
    \end{bmatrix}.
\end{align*}

The control input is designed as
\begin{align}
u = - \mathbf{K_x}\mathbf{x}- \mathbf{K_{\dot x}}\dot{\mathbf{x}} -m_T ge_3,\label{eqn:uLC}
\end{align}
where the controller gains,  $\mathbf{K_x},\mathbf{K_{\dot x}}\in\Re^{3\times 2n}$ are selected such that the linearized dynamics \refeqn{LEOM} becomes Hurwitz. This provides asymptotic stability of the desired equilibrium according to the Lyapunov indirect method.

In short, the tracking control for the quadrotor position, \refeqn{uFT} is engaged first such that the quadrotor becomes sufficiently close to the desired equilibrium. Then, the linear control \refeqn{uLC} is applied to asymptotically stabilize both the quadrotor and the tether.

\subsection{Full Dynamic Model $(n>1)$}

The procedures to extend the above control systems into the full dynamic model, that incorporate the attitude dynamics of the quadrotor, are identical to \refeqn{b3i}-\refeqn{Mi} described at Section III-C.

\subsection{Numerical Example}

The properties of the quadrotor and the tether are identical to Section \ref{sec:NE1}. The initial conditions are chosen such that the quadrotor is at $x=[2.46,0,-0.43]^T\in\Re^3$, and the tether is hanging while minimizing the gravitational potential. The intermediate position is chosen with $\delta=0.01$, $\gamma=1$, and the switching occurs at $t=3$. The corresponding numerical results are illustrated at Figure \ref{fig:3}, where it is shown that the quadrotor is translated to the desired position asymptotically. In contrast to Figures 3.(b) and 4.(b), the vibration of the tether is effectively eliminated at Figure 5.(d) and the presented animation. 

\begin{figure}
\centerline{
	\subfigure[Snapshots of maneuver]{
		\setlength{\unitlength}{0.1\columnwidth}\scriptsize
		\begin{picture}(4.6,4.9)(0,0.6)
		\put(0,0){\includegraphics[width=0.45\columnwidth]{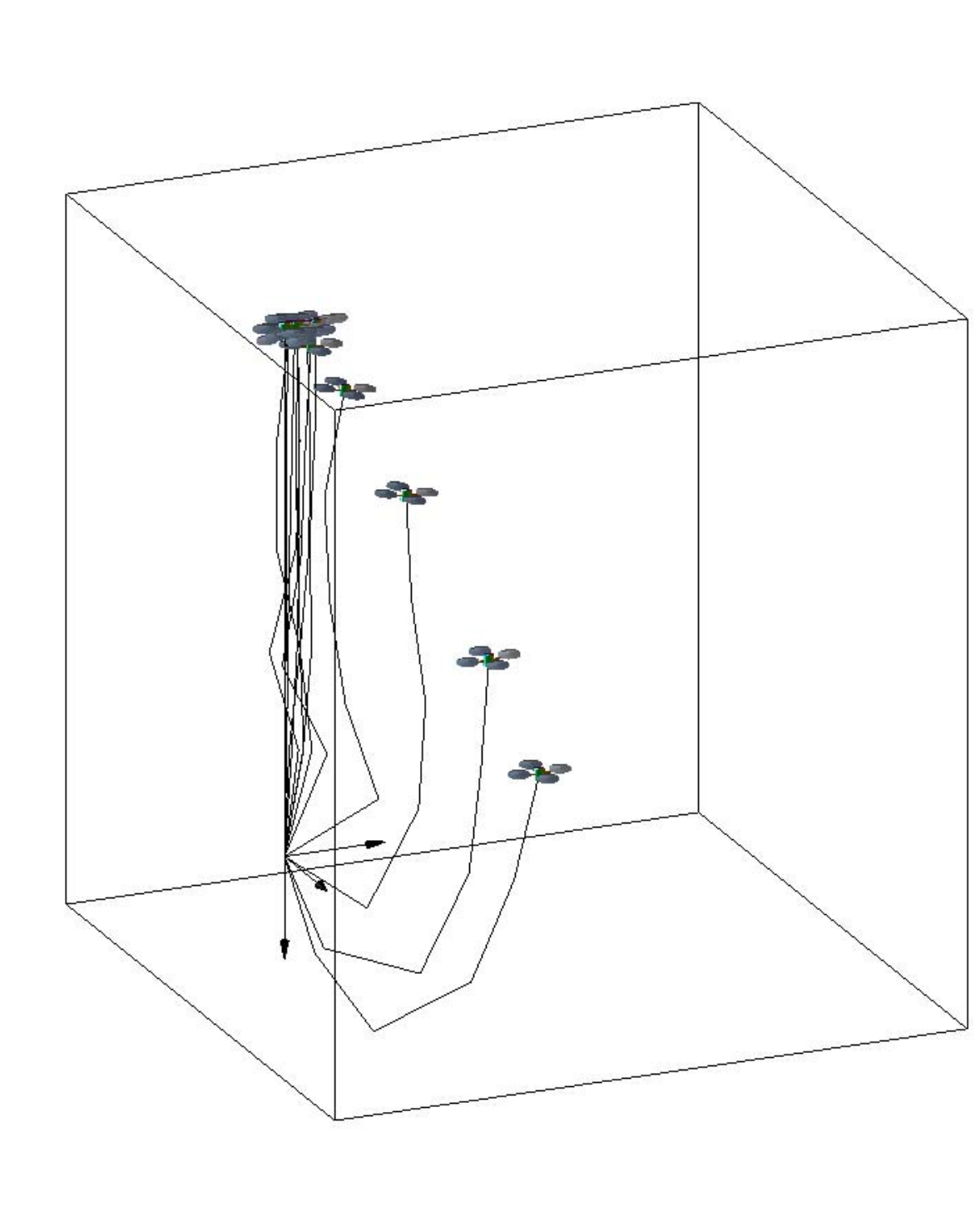}}
		\put(2.7,2){$t=0$}
		\put(2.1,3.2){$t=1$}
		\put(0.8,4.2){$t>2.5$}
		\end{picture}}
	\subfigure[Quadrotor position ($x_d$:red, $x$:blue)]{
		\includegraphics[width=0.45\columnwidth]{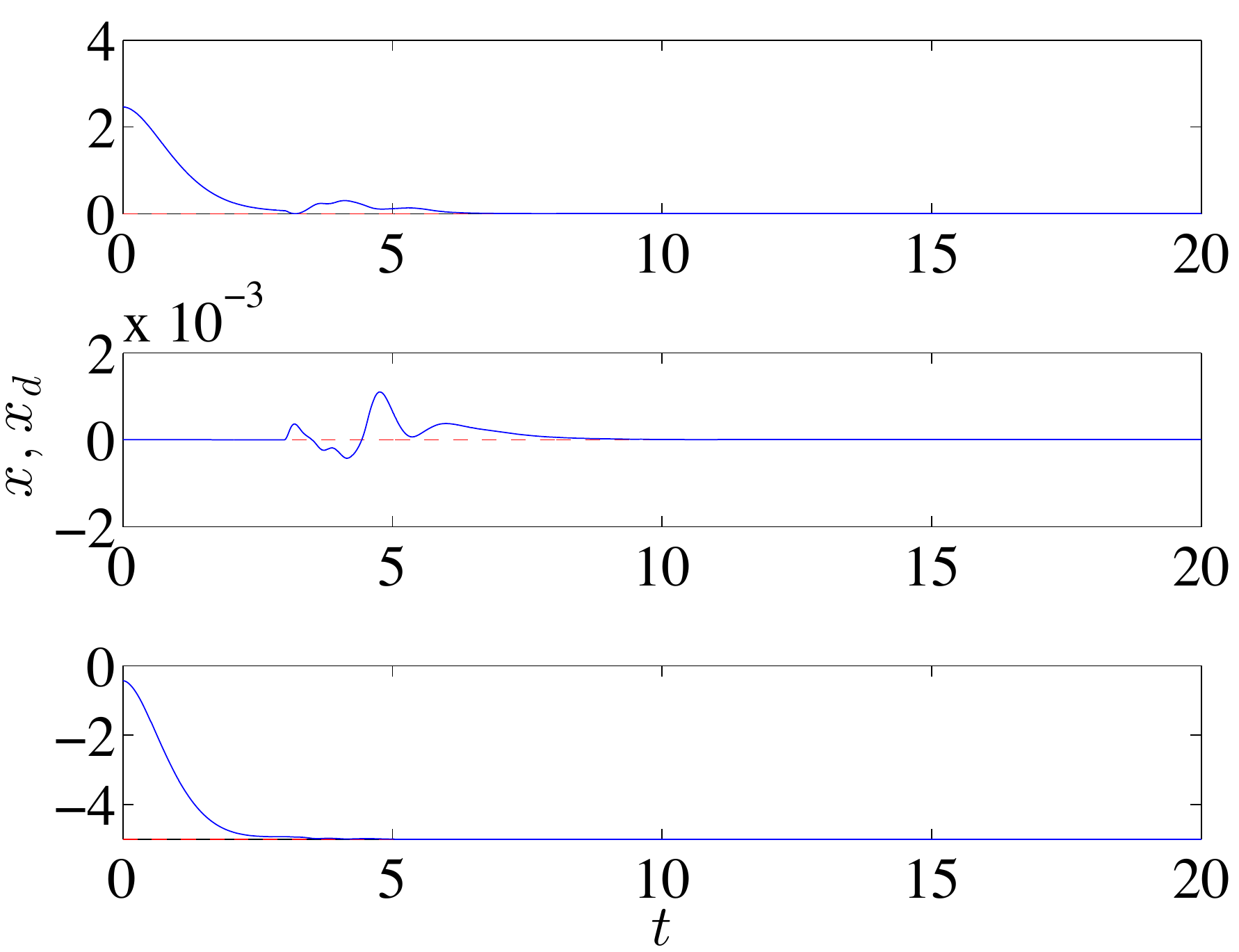}}
}		
\centerline{
	\subfigure[Tether direction error $e_q$]{
		\includegraphics[width=0.45\columnwidth]{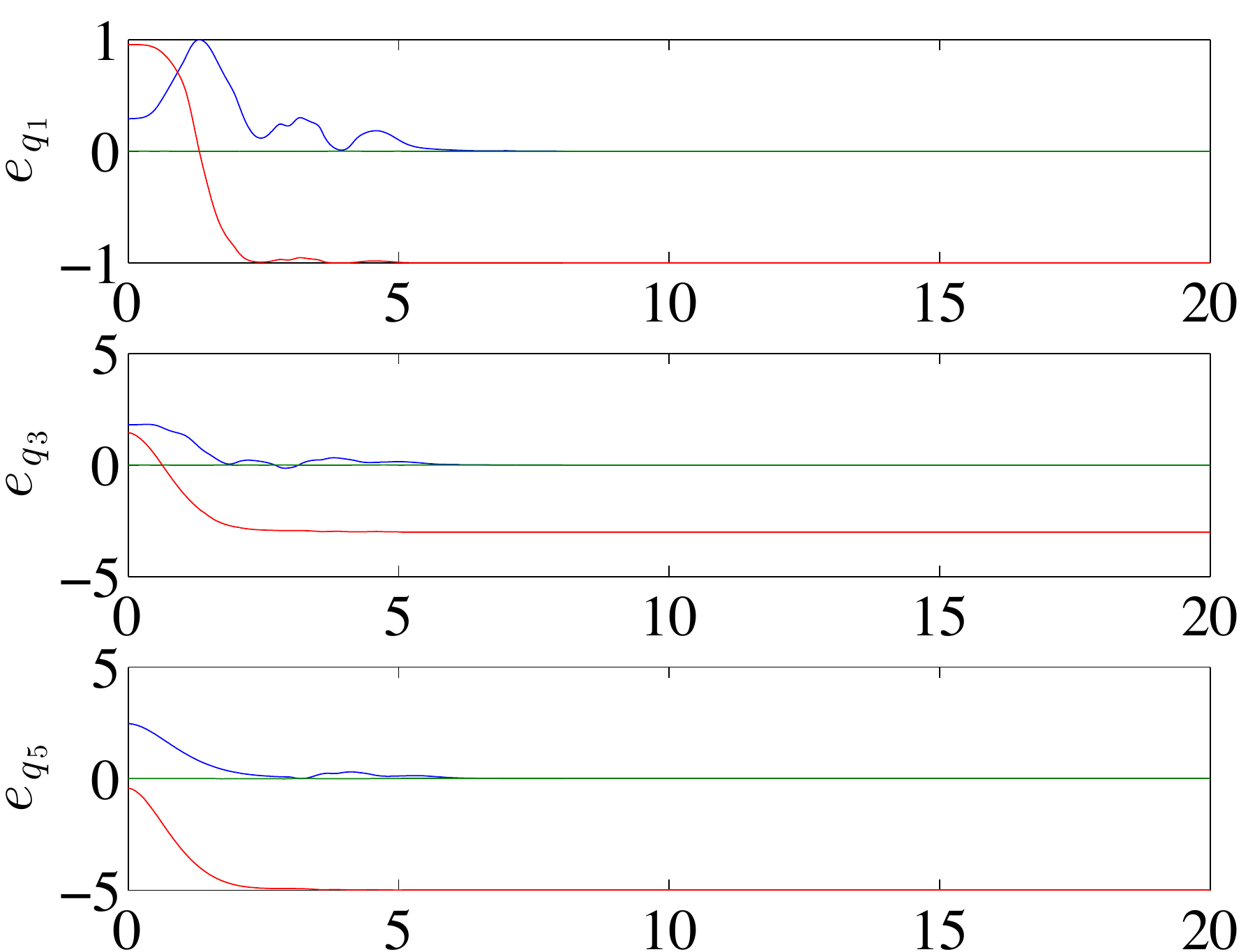}}
	\subfigure[Position of selected links $x_1,x_3,x_5=x$]{
		\includegraphics[width=0.45\columnwidth]{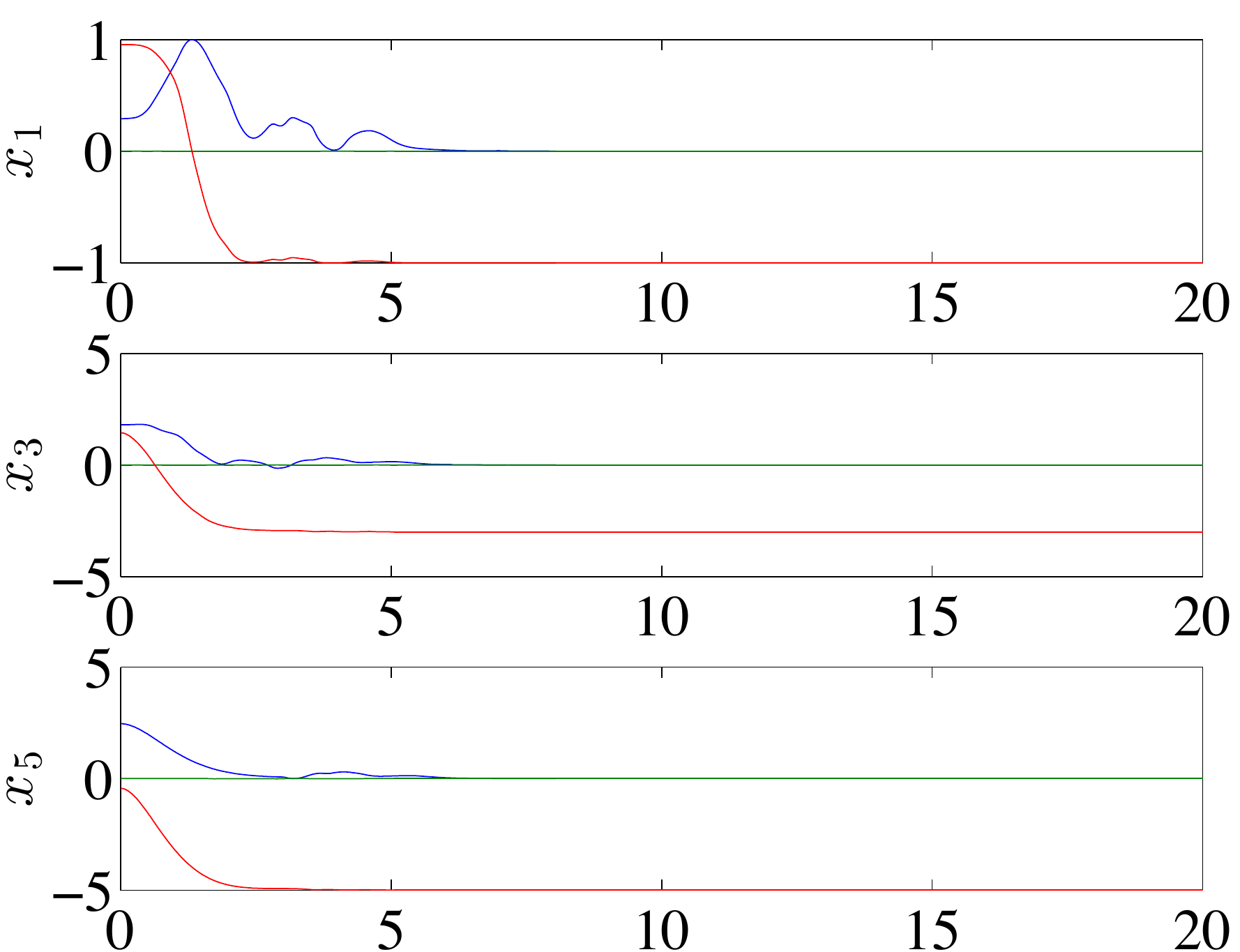}}
}
\caption{Numerical results for $n=5$ (animation available at \url{http://fdcl.seas.gwu.edu/CDC15_Fig5.mov})}\label{fig:3}
\end{figure}

In short, the control system presented in this section is developed for the flexible cable model when $n>1$ at the cost of increased complexity. As illustrated by numerical examples, the undesired lateral vibrations of the tether, observed at Section III are eliminated. The development of the dynamic model and the control system design for the tethered quadrotor with flexible tether have ben unprecedented. While the proposed approach is developed for a stabilization problem where the desired link direction $q_{i_d}$ is fixed, but it is readily generalized to the tracking problems.

\appendix

\subsection{Euler--Lagrange equations}

Here, we develop the Euler--Lagrange equations for the Lagrangian given by \refeqn{T} and \refeqn{U}. The Lagrangian is independent of $R$. The derivatives of the Lagrangian with respect to $\dot q_i,\Omega,q_i,\Omega$ are given by
\begin{gather}
\D_{\dot q_i} \mathcal{L} = \sum_{j=1}^n M_{ij}\dot q_j,\quad
\D_{q_i} \mathcal{L} = M_{g_i}l_i g e_3,\quad
\D_{\Omega} \mathcal{L} = J\Omega.\label{eqn:dL_qi}
\end{gather}
Substituting $\delta R=R\hat\eta$ into the attitude kinematic equations \refeqn{dotR} and rearranging, the variation of the angular velocity can be written as $\delta\Omega=\dot\eta + \Omega\times\eta$~\cite{Lee08}. For the variation model of $q_i$ given at \refeqn{delqi}, we have $\delta q_i =\xi_i\times q_i$ and $\dot \xi_i = \dot\xi_i\times q_i + \xi_i\times \dot q_i$~\cite{LeeLeoIJNME09}. 

Let $\mathfrak{G}=\int_{t_0}^{t_f}\mathcal{L}\,dt$ be the action integral. Using the above expression for the variations, and integrating by parts, the variation of the action integral can be written as
\begin{align*}
\delta\mathfrak{G} & = \int_{t_0}^{t_f} 
\braces{-\frac{d}{dt}\D_{\Omega} \mathcal{L} -\Omega\times \D_{\Omega} \mathcal{L}}\cdot\eta\\
&\quad +\sum_{i=1}^n \braces{-q_i\times \frac{d}{dt}\D_{\dot q_i}\mathcal{L}+q_i\times \D_{q_i} \mathcal{L}}\cdot\xi_i \,dt.
\end{align*}

The total thrust of the quadrotor with respect to the inertial frame is given by $u=-fRe_3\in\Re^3$ and the total moment of the quadrotor is $M\in\Re^3$ with respect to the body-fixed frame. The corresponding virtual work can be written as
\begin{align*}
\delta\mathcal{W} = \int_{t_0}^{t_f} u\cdot \sum_{i=1}^n l_i (\xi_i\times q_i) + M\cdot\eta \,dt. 
\end{align*}

According to the Lagrange-d'Alembert principle, we have $\delta\mathfrak{G}=-\delta\mathcal{W}$ to obtain
\begin{gather*}
\frac{d}{dt}\D_{\Omega} \mathcal{L} +\Omega\times \D_{\Omega} \mathcal{L}=M,\\
-q_i\times \frac{d}{dt}\D_{\dot q_i}\mathcal{L}+q_i\times \D_{q_i} \mathcal{L} = -l_i\hat q_i u.
\end{gather*}
The first equation yields \refeqn{Wdot}. Multiplying both sides of the second equation with $\hat q_i$, and substituting \refeqn{dL_qi}, \refeqn{dL_qi},
\begin{align}
-\hat q_i^2\sum_{j=1}^n M_{ij}\ddot q_j +\hat q_i^2M_{g_i}l_i g e_3 = -l_i\hat q_i^2 u.\label{eqn:tmp0}
\end{align}
Since $q_i\cdot \dot q_i=0$, it follows that $q_i\cdot\ddot q_i +\|\dot q_i\|^2=0$. Thus,
\begin{align*}
-\hat q_i^2 \ddot q_i = (I_{3\times 3}-q_iq_i^T)\ddot q_i = \ddot q_i +\|\dot q_i\|^2 q_i.
\end{align*}
Using this identity, \refeqn{tmp0} can be rewritten as
\begin{align*}
M_{ii}\ddot q_{i} -\hat q_i^2\sum_{\substack{j=1\\j\neq i}}^n M_{ij}\ddot q_j + M_{ii}\|\dot q_i\|^2 q_i +\hat q_i^2 M_{g_i}l_i g e_3 = -\hat q_i^2l_i u_i,
\end{align*}
which corresponds to \refeqn{qiddot}. Rearranging \refeqn{qiddot} with the fact that $\ddot q_i = -\hat q_i\dot\omega_i - \|\omega_i\|^2 q_i$ and $\hat q_i\ddot q_i = -\hat q_i^2\dot \omega_i =\dot\omega_i$~\cite{LeeLeoIJNME09}, we obtain \refeqn{widot}.

\subsection{Proof of Proposition 1}

Define an error function, $\Psi(q)=1-q\cdot q_d$. For a positive constant $\psi_q<2$, define the following open domain containing the zero equilibrium, $D_q=\{(q,\omega)\in\Sph^2\times \Re^3\,|\, \Psi_q < \psi_q$. Then, it is shown that
\begin{align*}
\frac{1}{2}\norm{e_q}^2\leq \Psi_q \leq \frac{1}{2-\psi_q}\norm{e_q}^2,
\end{align*}
where the upper bound is satisfied for any $q\in D_q$~\cite{Wu12}. Define a Lyapunov function as
\begin{align*}
\mathcal{V}_q = \frac{1}{2}\|e_\omega\|^2 + c_q e_\omega \cdot e_q + k_q \Psi_q,
\end{align*}
which is bounded as
\begin{align*}
z_q^T \underline{P}_q z_q \leq \mathcal{V}_q \leq z_q^T \overline{P}_q z_q,
\end{align*}
where $z_q=[\|e_q\|,\|e_\omega\|]\in\Re^2$, and the symmetric matrices $\underline{P}_q,\overline{P}_q\in\Re^{2\times 2}$ are defined as
\begin{align*}
\underline{P}_q = \frac{1}{2}\begin{bmatrix} 2k_q & -c_q \\ -c_q & 1\end{bmatrix},\quad
\overline{P}_q = \frac{1}{2}\begin{bmatrix} \frac{2k_q}{2-\psi_q} & c_q \\ c_q & 1\end{bmatrix}.
\end{align*}
The time-derivative of $\mathcal{V}_q$ along \refeqn{dotwu} can be written as
\begin{align*}
\dot{\mathcal{V}}_q & = -z_q^T W_q z_q,
\end{align*}
where the matrix $W_q\in\Re^{2\times 2}$ is defined as (\cite{Wu12})
\begin{align*}
W_q =\begin{bmatrix} c_q k_q & -\frac{c_qk_\omega}{2} \\
-\frac{c_qk_\omega}{2} & k_\omega -c_q\end{bmatrix}.
\end{align*}
If the constant $c_q$ is sufficiently small, all of the matrices $\underline{P}_q,\overline{P}_q, W_q$ are positive-definite, which follows that the zero equilibrium of the tracking errors is exponentially stable. Substituting \refeqn{uprl1} into \refeqn{T} yields $T=T_d$.

\subsection{Proof of Proposition 2}

The proof is based on the singular perturbation theory~\cite{Kha96}, i.e., if the attitude dynamics is sufficiently fast, the stability properties of the \textit{reduced system} summarized by Proposition 1 holds. More explicitly, the \textit{boundary-layer} system corresponds to the attitude dynamics of the quadrotor, and the attitude tracking errors exponentially converge to zero at the rate proportional to $\frac{1}{\epsilon}$~\cite[Proposition 2]{LeeSrePICDC13}. The \textit{reduced system} represents the dynamics of the link when $R=R_c$, and from \refeqn{b3i}, \refeqn{Rc}, and \refeqn{fi},
\begin{align*}
-fRe_3 = (u\cdot R_c e_3) R_c e_3 = (u\cdot  \frac{u}{\norm{u}})\frac{u}{\norm{u}}=u.
\end{align*}
Therefore, the reduced system corresponds to the simplified dynamic model analyzed at Proposition 1. Then, according to Tikhonov's theorem~\cite[Thm 9.3]{Kha96}, there exists $\epsilon^\star >0$ such that for all $\epsilon<\epsilon^\star$, the origin of the full dynamics model is exponentially stable.

\subsection{Proof of Proposition 3}

We first show that there exist controller gains such that $x(t)\in D_x$ for all $t\geq 0$. Substituting \refeqn{uFT} into \refeqn{xddotFT}, we obtain the following linear error dynamics.
\begin{align}
\ddot e_x = -k_x e_x -k_{\dot x} \dot e_x.\label{eqn:exddot}
\end{align}
Define
\begin{align*}
\mathcal{U} = \frac{1}{2}\|\dot e_x\|^2 + \frac{k_x}{2} \|e_x\|^2.
\end{align*}
Its time-derivative along the solutions of \refeqn{exddot} is given by $\dot{\mathcal{U}}=-k_{\dot x} \|\dot e_x\|^2$, which follows 
\begin{align*}
k_x \|e_x(t)\|^2 \leq \mathcal{U}(t) \leq \mathcal{U}(0)=\frac{1}{2}\|\dot e_x(0)\|^2. 
\end{align*} 
Therefore, $\|e_x(t)\|\leq \frac{1}{\sqrt{2k_x}}\|\dot e_x(0)\|$ for any $t\geq 0$. Since $\|x\|-\|y_d\|\leq \|x-y_d\| = \|e_x\|$, 
\begin{align*}
\|x(t)\| \leq \|y_d(t)\| + \frac{1}{\sqrt{2k_x}}\|\dot e_x(0)\|.
\end{align*}
As $y_d(t)$ lies in $D_x$ always, $\|y_d(t)\| < \sum_{i=1}^l l_i$, and therefore, if $k_x$ is sufficiently small, the above inequality guarantees $\|x(t)\| < \sum_{i=1}^l l_i$ as well. Therefore, $x(t)\in D_x$ for all $t\geq 0$. 

According to Assumption 1, the control input \refeqn{uFT} is well-defined, and it is straightforward to show the exponential stability of the linear error dynamics given by \refeqn{exddot}.


\subsection{Linearization}


The perturbation model given at \refeqn{qieps} yields $\delta q_i=-\xi_i\times e_3$. Substituting it into \refeqn{dotqi}, $\delta\dot q_i$ is given by
\begin{align*}
\delta \dot q_i = \dot\xi_i \times -e_3 =\delta\omega_i \times -e_3 + 0\times (\xi_i\times -e_3)=\delta\omega_i \times -e_3.
\end{align*}
Since both sides of the above equation is perpendicular to $e_3$, this is equivalent to $e_3\times(\dot\xi_i\times e_3) = e_3\times(\delta\omega_i\times e_3)$, which yields
\begin{gather*}
\dot \xi - (e_3\cdot\dot\xi) e_3 = \delta\omega_i -(e_3\cdot\delta\omega_i)e_3.
\end{gather*}
Since $\xi_i\cdot e_3 =0$, we have $\dot\xi\cdot e_3=0$. Also, $e_3\cdot\delta\omega_i=0$ from the constraint. Substituting these to above, we obtain the linearized equation for the kinematics equation:
\begin{align}
\dot\xi_i = \delta\omega_i.\label{eqn:dotxii}
\end{align}
Substituting these into \refeqn{widot}, and ignoring the higher order terms,
\begin{align*}
M_{ii}\delta\dot\omega_i - \sum_{\substack{j=1\\j\neq i}}^nM_{ij} \hat {e}_3^2 \delta\dot\omega_j
+M_{g_i}l_ig \hat e_3^2 \xi_i = -l_i \hat e_3\delta u + l_im_T g\hat e_3^2 \xi_i.
\end{align*}
Let $C=[e_1, e_2]\in\Re^{3\times 2}$. Multiplying the both side of the above equation by $C^T$, and rearranging it with the facts that $C^T\hat e_3^2=-C^T$, $\hat e_3^2 = \mathrm{diag}[-1,-1,0]$, $C^T\hat e_3^2 C=-I_2$ and $\hat e_3 C C^T = \hat e_3$, we obtain \refeqn{LEOM}.

\bibliography{/Users/tylee/Documents/BibMaster}
\bibliographystyle{IEEEtran}

\end{document}